%
%
%
%
%
\RequirePackage{fix-cm}
\documentclass[smallextended]{svjour3}       
\smartqed  
\usepackage{graphicx}
\usepackage{latexsym, amsmath, amsfonts, amscd, amssymb, verbatim, longtable}
\usepackage{multirow, booktabs}
\usepackage{color}
%
%
%
%

\begin{document}
	

\title{Qualitative Properties of the Minimum Sum-of-Squares Clustering Problem}


\author{Tran Hung Cuong         \and 
        Jen-Chih Yao \and Nguyen Dong Yen
}


\institute{Tran Hung Cuong
	\at Department of Computer Science, Faculty of Information Technology, Hanoi University of  Industry, 298 Cau Dien Road, Bac Tu Liem District, Hanoi, Vietnam \\
    \email{tranhungcuonghaui@gmail.com}           
    \and Jen-Chih Yao \at Center for General Education, China Medical University, Taichung 40402, Taiwan\\
    \email{yaojc@mail.cmu.edu.tw}
    \and
    Nguyen Dong Yen\at
    Institute of Mathematics Vietnam Academy of Science and Technology, 18 Hoang Quoc Viet, Hanoi 10307, Vietnam \\
    \email{ ndyen@math.ac.vn}
}

\date{Received: date / Accepted: date}

\maketitle

\begin{abstract}
A series of basic qualitative properties of the minimum sum-of-squares clustering problem are established in this paper. Among other things, we clarify the solution existence, properties of the global solutions, characteristic properties of the local solutions, locally Lipschitz property of the optimal value function, locally upper Lipschitz property of the global solution map, and the Aubin property of the local solution map. We prove that the problem in question always has a global solution and, under a mild condition, the global solution set is finite and the components of each global solution can be computed by an explicit formula. Based on a newly introduced concept of nontrivial local solution, we get necessary conditions for a system of centroids to be a nontrivial local solution. Interestingly, we are able to show that these necessary conditions are also sufficient ones. Finally, it is proved that the optimal value function is locally Lipschitz, the global solution map is locally upper Lipschitz, and the local solution map has the Aubin property, provided that the original data points are pairwise distinct. Thanks to the obtained complete characterizations of the nontrivial local solutions, one can understand better the performance of not only the $k$-means algorithm, but also of other solution methods for the minimum sum-of-squares clustering problem.
\keywords{Clustering problem \and solution existence \and local solution \and optimality condition \and stability property}
\end{abstract}

\section{Introduction}
\label{intro}
Clustering is an important task in data mining and  it is a powerful tool for automated analysis of data. Cluster is a subset of the data set. The elements of a cluster are similar in some sense (see, e.g., \cite[p.~32]{Aggarwal 2014} and \cite[p.~250]{Kantardzic 2011}). 

\medskip

Clustering is an unsupervised technique dealing with problems of organizing a collection of patterns into clusters based on similarity. Cluster analysis  is applied in  different areas such as image segmentation, information retrieval, pattern recognition, pattern classification, network analysis, vector quantization and data compression, data mining and knowledge discovery business, document clustering and image processing (see, e.g., \cite[p.~32]{Aggarwal 2014} and \cite{Kumar 2017}).  

\medskip
There are many kinds of clustering problems, where different criteria are used. Among these criteria, \textit{the \textbf{M}inimum \textbf{S}um-of-\textbf{S}quares \textbf{C}lustering} (MSSC for short) criterion is one of the most used \cite{Bock_1998,Brusco 2006,Costa 2017,Du Merle 2000,Kumar 2017,PhamDinh_LeThi_2009,Peng 2005,Shereli 2005}. Biding by this criterion, one tries to make the sum of the squared Euclidean distances from each data point to the centroid of its cluster as small as possible. The \textit{MSSC problem} requires to partition a finite data set into a given number of clusters in order to minimize the just mentioned sum. 

\medskip
The importance of the MSSC problem was noticed by researchers long time ago and they have developed many algorithms to solve it (see, e.g., \cite{Bagirov_2008,Bagirov_2014,Bagirov_Rubinov_2003,Bagirov_2011,Bagirov_Yearwood_2006,Likas_2003,Ordin_Bagirov_2015,Xie_2011}, and the references therein). Since this is a NP-hard problem \cite{Aloise 2009,Mahajan_NPHard_2009}, the effective existing algorithms reach at most local solutions. These algorithms may include certain techniques for improving the current data partition to seek better solutions. For example, in \cite{Ordin_Bagirov_2015}, the authors proposed a method to find good starting points that is based on the DCA (Difference-of Convex-functions Algorithms). The latter has been applied to the MSSC problem in \cite{Bagirov_2014,PhamDinh_LeThi_2007,PhamDinh_LeThi_2009}.

\medskip
It is well known that a deep understanding on qualitative properties of an optimization problem is very helpful for its numerical solution. To our knowledge, apart from the fundamental necessary optimality condition given recently by Ordin and Bagirov \cite{Ordin_Bagirov_2015}, qualitative properties of the MSSC problem have not been addressed in the literature until now. 

\medskip
The first aim of the present paper is to prove some basic properties of the above problem. We begin with clarifying the equivalence between the mixed integer programming formulation and the unconstrained nonsmooth nonconvex optimization formulation of the problem, that were given in \cite{Ordin_Bagirov_2015}. Then we prove that the MSSC problem always has a global solution and, under a mild condition, the global solution set is finite and the components of each global solution can be computed by an explicit formula.

\medskip
The second aim of our paper is to characterize the local solutions of the MSSC problem. Based on the necessary optimality condition in DC programming \cite{Dem'yanov_Vasil'ev_1985}, some arguments of \cite{Ordin_Bagirov_2015}, and a newly introduced concept of \textit{nontrivial local solution}, we get necessary conditions for a system of centroids to be a nontrivial local solution. Interestingly, we are able to prove that these necessary conditions are also sufficient ones. Since the known algorithms for solving the MSSC problem focus on the local solutions, our characterizations may lead to a better understanding and further refinements of the existing algorithms. Here, by constructing a suitable example, we investigate the performance of the $k$-means algorithm, which can be considered as a basic solution method for the MSSC problem.  

\medskip
The third aim of this paper is to analyze the changes of the optimal value, the global solution set, and the local solution set of the  MSSC problem with respect to small changes in the data set. Three principal stability properties will be established. Namely, we will prove that the optimal value function is locally Lipschitz, the global solution map is locally upper Lipschitz, and the local solution map has the Aubin property, provided that the original data points are pairwise distinct. 

\medskip
The remainder of the paper consists of three sections. Section 2 describes the MSSC problem and its basic properties. The $k$-means algorithm is recalled in this section. In Section 3, we characterize the local solutions and investigate the performance of the just mentioned algorithm. Section 4 presents a comprehensive stability analysis of the MSSC problem.

\section{Basic Properties of the MSSC Problem}

Let $A = \{{\mathbf{a}}^1,...,{\mathbf{a}}^m\}$ be a finite set of points (representing the data points to be grouped) in the $n$-dimensional Euclidean space $\mathbb R^n$ equipped with the scalar product $\langle \mathbf{x},\mathbf{y}\rangle =\displaystyle\sum_{i=1}^n x_iy_i$ and the norm $\|\mathbf{x}\|=\left(\displaystyle\sum_{i=1}^{n}x_i^2\right)^{1/2}$. Given a positive integer $k$ with $k\leq m$, one wants to partition $A$ into disjoint   subsets $A^1,\dots, A^k,$ called \textit{clusters}, such that a \textit{clustering criterion} is optimized. 

\medskip
If one associates to each cluster $A^j$ a \textit{center} (or \textit{centroid}), denoted by ${\mathbf{x}}^j\in \mathbb R^n$, then the following well-known \textit{variance} or \textit{SSQ (Sum-of-Squares) clustering criterion} (see, e.g., \cite[p.~266]{Bock_1998})
\begin{equation*}
\psi(\mathbf{x},\alpha):=\frac{1}{m}\sum_{i=1}^{m}\left(\sum_{j=1} ^{k}\alpha_{ij}\|{\mathbf{a}}^i-{\mathbf{x}}^j\|^2\right)\ \;\longrightarrow\ \;\min,
\end{equation*}
where $\alpha_{ij}=1$  if ${\mathbf{a}}^i\in A^j$ and $\alpha_{ij}=0$ otherwise, is used. Thus, the above partitioning problem can be formulated as the constrained optimization problem
\begin{equation}\label{basic_clustering_problem}\begin{array}{rl}
\min\Big\{\psi(\mathbf{x},\alpha)\; \mid &\in\mathbb R^{n\times k},\ \mathbb{\alpha}=(\alpha_{ij})\in \mathbb{R}^{m\times k},\ \alpha_{ij}\in \{0, 1\},\\ & \displaystyle\sum_{j=1}^{k}\alpha_{ij}=1,\ i=1,\dots,m,\ j=1,\dots, k\Big\},
\end{array}
\end{equation} 
where the centroid system $\mathbf{x}=({\mathbf{x}}^1,\dots,{\mathbf{x}}^k)$ and the incident matrix $\mathbb{\alpha}=(\alpha_{ij})$ are to be found.

\medskip
Since \eqref{basic_clustering_problem} is a difficult \textit{mixed integer programming problem}, instead of it one usually considers (see, e.g., \cite[p.~344]{Ordin_Bagirov_2015}) next \textit{unconstrained nonsmooth nonconvex optimization problem}
\begin{equation}\label{DC_clustering_problem}
\min\Big\{f(\mathbf{x}):=\frac{1}{m}\sum_{i=1}^{m}\left(\min_{j=1,\dots,k} \|{\mathbf{a}}^i-{\mathbf{x}}^j\|^2\right)\, \mid\, \mathbf{x}=({\mathbf{x}}^1,\dots,{\mathbf{x}}^k)\in\mathbb R^{n\times k}\Big\}.
\end{equation} 

Both models \eqref{basic_clustering_problem} and \eqref{DC_clustering_problem} are referred to as \textit{the minimum sum-of-squares clustering problem} (the MSSC problem). As the decision variables of  \eqref{basic_clustering_problem} and \eqref{DC_clustering_problem} belong to different Euclidean spaces, the equivalence between these minimization problems should be clarified. For our convenience, put $I=\{1,\dots,m\}$ and $J=\{1,\dots,k\}$. 

\medskip
Given a vector $\bar {\mathbf{x}}=(\bar {\mathbf{x}}^1,\dots,\bar {\mathbf{x}}^k)\in\mathbb R^{n\times k}$, we inductively construct~$k$ subsets $A^1,\dots,A^k$ of $A$ in the following way. Put $A^0=\emptyset$ and
\begin{equation}\label{natural_clustering}
A^j=\left\{{\mathbf{a}}^i\in A\setminus \left(\bigcup_{p=0}^{j-1} A^p\right)\; \mid\; \|{\mathbf{a}}^i-\bar {\mathbf{x}}^j\|=\displaystyle\min_{q\in J} \|{\mathbf{a}}^i-\bar {\mathbf{x}}^q\|\right\}
\end{equation}
for $j\in J$. This means that, for every $i\in I$, the data point ${\mathbf{a}}^i$ belongs to the cluster $A^j$ if and only if the distance $\|{\mathbf{a}}^i-\bar {\mathbf{x}}^j\|$ is the minimal one among the distances $\|{\mathbf{a}}^i-\bar {\mathbf{x}}^q\|$, $q\in J$, and ${\mathbf{a}}^i$ does not belong to any  cluster $A^p$ with $1\leq p\leq j-1$. We will call this family $\{A^1,\dots,A^k\}$ \textit{the natural clustering associated with $\bar {\mathbf{x}}$}.

\begin{definition}\label{def_attraction_set}
	{\rm  Let $\bar{\mathbf{x}}=(\bar {\mathbf{x}}^1,\dots,\bar {\mathbf{x}}^k)\in\mathbb R^{n\times k}$. We say that the component $\bar {\mathbf{x}}^j$ of $\bar{\mathbf{x}}$ is \textit{attractive} with respect to the data set $A$ if the set
		$$A[\bar {\mathbf{x}}^j]:=\left\{{\mathbf{a}}^i\in A\; \mid\; \|{\mathbf{a}}^i-\bar {\mathbf{x}}^j\|=\displaystyle\min_{q\in J} \|{\mathbf{a}}^i-\bar {\mathbf{x}}^q\|\right\}$$ is nonempty. The latter is called the \textit{attraction set} of $\bar {\mathbf{x}}^j$.}
\end{definition}

Clearly, the cluster $A^j$ in \eqref{natural_clustering} can be represented as follows:
$$A^j=A[\bar {\mathbf{x}}^j]\setminus \left(\bigcup_{p=1}^{j-1} A^p\right).$$

\begin{proposition}\label{equivalence_prop} If $(\bar{\mathbf{x}},\bar{\alpha})$ is a solution of \eqref{basic_clustering_problem}, then $\bar{\mathbf{x}}$ is a solution of \eqref{DC_clustering_problem}. Conversely, if  $\bar{\mathbf{x}}$ is a solution of \eqref{DC_clustering_problem}, then the natural clustering defined by \eqref{natural_clustering} yields an incident matrix $\bar{\alpha}$ such that  $(\bar{\mathbf{x}},\bar{\alpha})$ is a solution of \eqref{basic_clustering_problem}.
\end{proposition}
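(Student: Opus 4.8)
The plan is to reduce both problems to a single ``partial minimization'' identity. The key observation is that, for every fixed centroid system $\mathbf{x}=(\mathbf{x}^1,\dots,\mathbf{x}^k)$, minimizing $\psi(\mathbf{x},\alpha)$ over all admissible incident matrices $\alpha$ (those with entries in $\{0,1\}$ and $\sum_{j\in J}\alpha_{ij}=1$ for each $i\in I$) decouples across the index $i$: for each data point $\mathbf{a}^i$ one must put all the weight on a single index $j$ realizing the smallest value $\|\mathbf{a}^i-\mathbf{x}^j\|^2$. Hence $\min_{\alpha}\psi(\mathbf{x},\alpha)=\frac{1}{m}\sum_{i=1}^{m}\min_{j\in J}\|\mathbf{a}^i-\mathbf{x}^j\|^2=f(\mathbf{x})$, and in particular $\psi(\mathbf{x},\alpha)\ge f(\mathbf{x})$ for every admissible $\alpha$, with equality attained at the incident matrix produced by the natural clustering \eqref{natural_clustering}.

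First I would verify that the natural clustering $\{A^1,\dots,A^k\}$ is genuinely a partition of $A$, so that it defines an admissible matrix $\bar\alpha$. Each $\mathbf{a}^i$ attains its minimal distance $\min_{q\in J}\|\mathbf{a}^i-\bar{\mathbf{x}}^q\|$ at some index; letting $j^\ast$ be the smallest such index, one checks that $\mathbf{a}^i$ is discarded at every stage $p<j^\ast$ (where its distance strictly exceeds the minimum) and is placed into $A^{j^\ast}$. Since a point is removed from the pool once assigned, each $\mathbf{a}^i$ lands in exactly one cluster, so $\sum_{j\in J}\bar\alpha_{ij}=1$. Moreover the membership condition in \eqref{natural_clustering} forces $\|\mathbf{a}^i-\bar{\mathbf{x}}^j\|^2=\min_{q\in J}\|\mathbf{a}^i-\bar{\mathbf{x}}^q\|^2$ whenever $\mathbf{a}^i\in A^j$, which yields the equality $\psi(\bar{\mathbf{x}},\bar\alpha)=f(\bar{\mathbf{x}})$.

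The two implications then follow almost immediately from the identity. For the first, if $(\bar{\mathbf{x}},\bar\alpha)$ solves \eqref{basic_clustering_problem}, optimality in $\alpha$ gives $\psi(\bar{\mathbf{x}},\bar\alpha)=f(\bar{\mathbf{x}})$; for an arbitrary $\mathbf{x}$ I would pass to its natural clustering matrix $\alpha^\ast$, so that $(\mathbf{x},\alpha^\ast)$ is admissible and $f(\bar{\mathbf{x}})=\psi(\bar{\mathbf{x}},\bar\alpha)\le\psi(\mathbf{x},\alpha^\ast)=f(\mathbf{x})$, proving that $\bar{\mathbf{x}}$ is globally optimal for \eqref{DC_clustering_problem}. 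For the converse, if $\bar{\mathbf{x}}$ solves \eqref{DC_clustering_problem} and $\bar\alpha$ comes from its natural clustering, then for every admissible $(\mathbf{x},\alpha)$ one chains $\psi(\mathbf{x},\alpha)\ge f(\mathbf{x})\ge f(\bar{\mathbf{x}})=\psi(\bar{\mathbf{x}},\bar\alpha)$, which is exactly the optimality of $(\bar{\mathbf{x}},\bar\alpha)$ in \eqref{basic_clustering_problem}.

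I do not expect a serious obstacle here; the content is a clean partial-minimization argument. The only point requiring genuine care is the treatment of ties: a data point equidistant to several centroids lies in several of the sets $A[\bar{\mathbf{x}}^q]$, and without a tie-breaking rule the incident matrix would be ill-defined. The inductive ``first-come'' construction in \eqref{natural_clustering} is precisely what resolves this, and confirming that it yields a well-defined partition achieving $f(\bar{\mathbf{x}})$ is the step I would write out most carefully.
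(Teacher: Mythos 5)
Your proposal is correct and rests on the same key fact as the paper's proof: the natural clustering \eqref{natural_clustering} produces an admissible matrix with $\psi(\mathbf{x},\alpha)=f(\mathbf{x})$, while $\psi(\mathbf{x},\alpha)\geq f(\mathbf{x})$ for every admissible $\alpha$. The paper phrases both implications as arguments by contradiction whereas you chain the inequalities directly (and you spell out the tie-breaking/partition check that the paper leaves as ``easy to see''), but this is the same argument in substance.
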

\begin{proof} First, suppose that  $(\bar{\mathbf{x}},\bar{\alpha})$ is a solution of \eqref{basic_clustering_problem}. As $\psi(\bar{\mathbf{x}},\bar{\alpha})\leq \psi(\bar{\mathbf{x}},\alpha)$ for every $\alpha=(\alpha_{ij})\in \mathbb{R}^{m\times k}$ with  $ \alpha_{ij}\in \{0, 1\}$, $\sum_{j=1}^{k}\alpha_{ij}=1$ for all $i\in I$ and $j\in J$, one must have 
$$\sum_{j=1} ^{k}\bar {\alpha}_{ij}\|{\mathbf{a}}^i-\bar {\mathbf{x}}^j\|^2=\min_{j\in J} \|{\mathbf{a}}^i-\bar {\mathbf{x}}^j\|^2\quad (\forall i\in I).$$ Hence, $\psi(\bar{\mathbf{x}},\bar{\alpha})=f(\bar{\mathbf{x}})$. If $\bar{\mathbf{x}}$ is not a solution of \eqref{DC_clustering_problem}, then one can find some $\tilde{\mathbf{x}}=(\tilde  {\mathbf{x}}^1,\dots,\tilde  {\mathbf{x}}^k)\in\mathbb R^{n\times k}$ such that $f(\tilde{\mathbf{x}})<f(\bar{\mathbf{x}})$. Let $\{A^1,\dots,A^k\}$ be the natural clustering associated with $\tilde{\mathbf{x}}$. For any $i\in I$ and $j\in J$, set  $\tilde {\alpha}_{ij}=1$ if ${\mathbf{a}}^i\in A^j$ and $\tilde {\alpha}_{ij}=0$ if ${\mathbf{a}}^i\notin A^j$. Let $\tilde{\alpha}=(\tilde {\alpha}_{ij})\in \mathbb{R}^{m\times k}$. From the definition of natural clustering and the choice of  $\tilde{\alpha}$ it follows that $\psi(\tilde{\mathbf{x}},\tilde{\alpha})=f(\tilde{\mathbf{x}})$. Then, we have
$$\psi(\bar{\mathbf{x}},\bar{\alpha})=f(\bar{\mathbf{x}})> f(\tilde{\mathbf{x}})=\psi(\tilde{\mathbf{x}},\tilde{\alpha}),$$ contrary to the fact that $(\bar{\mathbf{x}},\bar{\alpha})$ is a solution of \eqref{basic_clustering_problem}.

Now, suppose that $\bar{\mathbf{x}}$ is a solution of \eqref{DC_clustering_problem}. Let 
$\{A^1,\dots,A^k\}$ be the natural clustering associated with $\bar{\mathbf{x}}$. Put $\bar{\alpha}=(\bar {\alpha}_{ij})$, where $\bar {\alpha}_{ij}=1$ if ${\mathbf{a}}^i\in A^j$ and $\bar {\alpha}_{ij}=0$ if ${\mathbf{a}}^i\notin A^j$. It is easy to see that $\psi(\bar{\mathbf{x}},\bar{\alpha})=f(\bar{\mathbf{x}})$. If there is a feasible point $(\mathbf{x},\alpha)$ of \eqref{basic_clustering_problem} such that $\psi(\mathbf{x},\alpha)<\psi(\bar{\mathbf{x}},\bar{\alpha})$ then, by considering the natural clustering $\{\tilde A^1,\dots,\tilde A^k\}$ be associated with $\mathbf{x}$ and letting $\tilde{\alpha}=(\tilde {\alpha}_{ij})$ with $\tilde {\alpha}_{ij}=1$ if ${\mathbf{a}}^i\in\tilde A^j$ and $\tilde {\alpha}_{ij}=0$ if ${\mathbf{a}}^i\notin\tilde A^j$, we have $f(\mathbf{x})=\psi(\mathbf{x},\tilde{\alpha})\leq\psi(\mathbf{x},\alpha)$. Then we get $$f(\mathbf{x})\leq\psi(\mathbf{x},\alpha)<\psi(\bar{\mathbf{x}},\bar{\alpha})=f(\bar{\mathbf{x}}),$$ contrary to the global optimality of $\bar{\mathbf{x}}$ for \eqref{DC_clustering_problem}. One has thus proved that $(\bar{\mathbf{x}},\bar{\alpha})$ is a solution of \eqref{basic_clustering_problem}.
$\hfill\Box$
\end{proof} 

\begin{proposition}\label{natural_clustering_property} If ${\mathbf{a}}^1,...,{\mathbf{a}}^m$ are pairwise distinct points and $\{A^1,\dots,A^k\}$ is the natural clustering associated with a global solution $\bar{\mathbf{x}}$ of \eqref{DC_clustering_problem}, then $A^j$ is nonempty for every $j\in J$.
\end{proposition}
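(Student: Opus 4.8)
The plan is to argue by contradiction, exploiting the global optimality of $\bar{\mathbf{x}}$. Suppose some cluster $A^{j_0}$ is empty. Since the natural clustering defined by \eqref{natural_clustering} assigns every data point to exactly one cluster, the family $\{A^1,\dots,A^k\}$ partitions $A$, so the $m$ points lie in at most $k-1$ nonempty clusters. Because $k\le m$, we have $m\ge k>k-1$, and the pigeonhole principle produces an index $p$ with $|A^p|\ge 2$; as the data points are pairwise distinct, $A^p$ contains two distinct points. The idea is then to relocate the ``idle'' centroid $\bar{\mathbf{x}}^{j_0}$ onto one of these two points so as to strictly decrease the objective $f$ in \eqref{DC_clustering_problem}, contradicting the optimality of $\bar{\mathbf{x}}$.

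The crucial preliminary step is to show that the centroid of an empty cluster is never a \emph{strict} nearest centroid; precisely, I would establish
$$\min_{q\in J,\,q\neq j_0}\|{\mathbf{a}}^i-\bar{\mathbf{x}}^q\|=\min_{q\in J}\|{\mathbf{a}}^i-\bar{\mathbf{x}}^q\|\qquad(\forall i\in I).$$
Indeed, from $A^{j_0}=A[\bar{\mathbf{x}}^{j_0}]\setminus\bigcup_{p=1}^{j_0-1}A^p=\emptyset$ one gets $A[\bar{\mathbf{x}}^{j_0}]\subseteq\bigcup_{p=1}^{j_0-1}A^p$. Hence, if index $j_0$ realizes the minimal distance for some ${\mathbf{a}}^i$ (that is, ${\mathbf{a}}^i\in A[\bar{\mathbf{x}}^{j_0}]$), then ${\mathbf{a}}^i\in A^p$ for some $p<j_0$, so $\bar{\mathbf{x}}^p$ realizes the same minimum; while if $j_0$ does not realize the minimum, dropping it from the minimization clearly changes nothing. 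In either case the displayed equality holds.

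Finally, I would choose $\mathbf{a}^{i_0}\in A^p$ with $\bar{\mathbf{x}}^p\neq\mathbf{a}^{i_0}$ (possible since $A^p$ has two distinct points and $\bar{\mathbf{x}}^p$ can coincide with at most one of them), and define $\tilde{\mathbf{x}}$ by $\tilde{\mathbf{x}}^{j_0}=\mathbf{a}^{i_0}$ and $\tilde{\mathbf{x}}^q=\bar{\mathbf{x}}^q$ for $q\neq j_0$. Using the displayed equality, for every $i\in I$,
$$\min_{q\in J}\|\mathbf{a}^i-\tilde{\mathbf{x}}^q\|^2=\min\left\{\min_{q\in J}\|\mathbf{a}^i-\bar{\mathbf{x}}^q\|^2,\ \|\mathbf{a}^i-\mathbf{a}^{i_0}\|^2\right\}\le\min_{q\in J}\|\mathbf{a}^i-\bar{\mathbf{x}}^q\|^2,$$
so $f(\tilde{\mathbf{x}})\le f(\bar{\mathbf{x}})$ termwise. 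For $i=i_0$ the left-hand side is $0$, whereas $\min_{q\in J}\|\mathbf{a}^{i_0}-\bar{\mathbf{x}}^q\|^2=\|\mathbf{a}^{i_0}-\bar{\mathbf{x}}^p\|^2>0$; this strict drop in the $i_0$-th term gives $f(\tilde{\mathbf{x}})<f(\bar{\mathbf{x}})$, the desired contradiction. I expect the main obstacle to be the second step: verifying, through a careful treatment of tied nearest centroids, that moving $\bar{\mathbf{x}}^{j_0}$ never raises the cost of any data point. This is exactly the information encoded in the emptiness of $A^{j_0}$, and the pairwise distinctness of the data is what guarantees a strictly positive term to exploit.
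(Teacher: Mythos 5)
Your proof is correct and takes essentially the same route as the paper's: relocate the idle centroid $\bar{\mathbf{x}}^{j_0}$ onto a data point $\mathbf{a}^{i_0}\neq\bar{\mathbf{x}}^{p}$ chosen from a cluster containing at least two (distinct) points, and conclude $f(\tilde{\mathbf{x}})-f(\bar{\mathbf{x}})\leq-\frac{1}{m}\|\mathbf{a}^{i_0}-\bar{\mathbf{x}}^{p}\|^2<0$, contradicting global optimality. The only difference is that you spell out the tie-handling step (emptiness of $A^{j_0}$ implies removing $j_0$ from the minimization changes no point's cost), a detail the paper compresses into ``one can easily show.''
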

\begin{proof} Indeed, if there is some $j_0\in J$ with  $A^{j_0}=\emptyset$, then the assumption $k\leq m$ implies the existence of an index $j_1\in J$ such that $A^{j_1}$ contains at least two points. Since the elements of $A^{j_1}$ are pairwise distinct, one could find ${\mathbf{a}}^{i_1}\in A^{j_1}$ with ${\mathbf{a}}^{i_1}\neq \bar {\mathbf{x}}^{j_1}$. Setting $\tilde {\mathbf{x}}^j=\bar {\mathbf{x}}^j$ for $j\in J\setminus\{j_0\}$ and  $\tilde {\mathbf{x}}^{j_0}={\mathbf{a}}^{i_1}$, one can easily show that
\begin{equation*}\label{non-optimal}
f(\tilde{\mathbf{x}})-f(\bar{\mathbf{x}})\leq -\frac{1}{m}\|{\mathbf{a}}^{i_1}-\bar {\mathbf{x}}^{j_1}\|^2<0. \end{equation*} This contradicts the assumption saying that $\bar{\mathbf{x}}$ is a global solution of \eqref{DC_clustering_problem}. $\hfill\Box$
\end{proof} 
\begin{theorem}\label{thm_basic_facts}
	Both problems \eqref{basic_clustering_problem}, \eqref{DC_clustering_problem} have solutions.  If ${\mathbf{a}}^1,...,{\mathbf{a}}^m$ are pairwise distinct points, then the solution sets are finite. Moreover, in that case, if $\bar{\mathbf{x}}=(\bar {\mathbf{x}}^1,\dots,\bar {\mathbf{x}}^k)\in\mathbb R^{n\times k}$ is  a global solution of \eqref{DC_clustering_problem}, then the attraction set $A[\bar {\mathbf{x}}^j]$ is nonempty for every $j\in J$ and one has
	\begin{equation}\label{solution_components}
	\bar {\mathbf{x}}^j=\frac{1}{|I(j)|}\displaystyle\sum_{i\in I(j)} {\mathbf{a}}^i,
	\end{equation} where $I(j):=\left\{i\in I\mid {\mathbf{a}}^i\in A[\bar {\mathbf{x}}^j]\right\}$ with $|\Omega|$ denoting the number of elements of $\Omega$.
\end{theorem}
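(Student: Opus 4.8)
The plan is to prove the three assertions in turn, deriving the finiteness from the centroid formula \eqref{solution_components}, which I regard as the crux.

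\textbf{Existence.} For \eqref{basic_clustering_problem} I would exploit its mixed-integer structure. The incident matrices $\alpha$ range over the \emph{finite} collection of $0$–$1$ matrices with unit row sums, and for each fixed such $\alpha$ the map $\mathbf{x}\mapsto\psi(\mathbf{x},\alpha)$ is separable and convex, each summand $\mathbf{x}^j\mapsto\sum_{i:\,\alpha_{ij}=1}\|\mathbf{a}^i-\mathbf{x}^j\|^2$ attaining its minimum (at the cluster mean when the cluster is nonempty, trivially otherwise). Minimizing first in $\mathbf{x}$ and then over the finitely many $\alpha$ produces a solution of \eqref{basic_clustering_problem}, and the first half of Proposition \ref{equivalence_prop} turns it into a solution of \eqref{DC_clustering_problem}.

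\textbf{The centroid formula.} This is the step I expect to be the main obstacle, because the attraction sets $A[\bar{\mathbf{x}}^j]$ may overlap on data points equidistant from several centroids, so the natural clustering \eqref{natural_clustering} need not agree with them. To bypass this delicate bookkeeping I would freeze all components but the $j$-th and consider $h(\mathbf{y}):=f(\bar{\mathbf{x}}^1,\dots,\mathbf{y},\dots,\bar{\mathbf{x}}^k)$, for which $\bar{\mathbf{x}}^j$ is a global minimizer. Setting $c_i:=\min_{q\neq j}\|\mathbf{a}^i-\bar{\mathbf{x}}^q\|^2$, so that $h(\mathbf{y})=\tfrac{1}{m}\sum_{i\in I}\min\{\|\mathbf{a}^i-\mathbf{y}\|^2,c_i\}$, I would introduce the strictly convex majorant
\begin{equation*}
H(\mathbf{y}):=\frac{1}{m}\Big(\sum_{i\in I(j)}\|\mathbf{a}^i-\mathbf{y}\|^2+\sum_{i\notin I(j)}c_i\Big).
\end{equation*}
The crucial observation is that $h\le H$ on all of $\mathbb R^n$, while $h(\bar{\mathbf{x}}^j)=H(\bar{\mathbf{x}}^j)$, precisely because $\|\mathbf{a}^i-\bar{\mathbf{x}}^j\|^2\le c_i$ holds exactly for $i\in I(j)$. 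Since $H$ has the unique minimizer $\mathbf{y}^*=\tfrac{1}{|I(j)|}\sum_{i\in I(j)}\mathbf{a}^i$, the chain
\begin{equation*}
h(\bar{\mathbf{x}}^j)\le h(\mathbf{y}^*)\le H(\mathbf{y}^*)\le H(\bar{\mathbf{x}}^j)=h(\bar{\mathbf{x}}^j)
\end{equation*}
collapses to equalities, so $\bar{\mathbf{x}}^j$ minimizes $H$ too and hence equals $\mathbf{y}^*$; this is exactly \eqref{solution_components}. The merit of the majorization is that the tie indices are retained in $I(j)$ on both sides, so no separate treatment of equidistant points is required.

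\textbf{Finiteness.} Nonemptiness of each $A[\bar{\mathbf{x}}^j]$ is immediate, since $A^j\subseteq A[\bar{\mathbf{x}}^j]$ and $A^j\neq\emptyset$ by Proposition \ref{natural_clustering_property}. For the count I would note that \eqref{solution_components} recovers each $\bar{\mathbf{x}}^j$ from the subset $A[\bar{\mathbf{x}}^j]\subseteq A$, so the map $\bar{\mathbf{x}}\mapsto(A[\bar{\mathbf{x}}^1],\dots,A[\bar{\mathbf{x}}^k])$ is injective on the global solution set of \eqref{DC_clustering_problem} and lands in the finite family of $k$-tuples of subsets of $A$; the solution set is therefore finite. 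For \eqref{basic_clustering_problem} I would argue as in Proposition \ref{natural_clustering_property}: a one-point reassignment shows that no cluster can be empty at a solution, whence each $\bar{\mathbf{x}}^j$ is the unique mean of its nonempty cluster and is thus determined by $\bar{\alpha}$; since $\bar{\alpha}$ ranges over a finite set, only finitely many pairs $(\bar{\mathbf{x}},\bar{\alpha})$ can arise.
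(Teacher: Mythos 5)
Your proof is correct, but it takes a genuinely different route from the paper's in two places. For existence, you work on \eqref{basic_clustering_problem} directly, exploiting that the incidence matrices form a finite set and that for each fixed $\alpha$ the map $\mathbf{x}\mapsto\psi(\mathbf{x},\alpha)$ is convex, separable, and attains its infimum, then transfer to \eqref{DC_clustering_problem} by the first assertion of Proposition \ref{equivalence_prop}; the paper goes the opposite way, proving existence for \eqref{DC_clustering_problem} by the Weierstrass theorem on the product of balls $\bar B({\mathbf{a}}^0,2\rho)$ together with a truncation argument (any centroid outside the ball can be replaced by the barycenter ${\mathbf{a}}^0$ without increasing $f$), and then invoking the second assertion of Proposition \ref{equivalence_prop}. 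For the centroid formula \eqref{solution_components}, the paper, like you, freezes all blocks but the $j$-th, but it then picks $\varepsilon>0$ so that the strict inequalities $\|{\mathbf{a}}^i-{\mathbf{x}}^j\|>\min_{q\in J}\|{\mathbf{a}}^i-\bar{\mathbf{x}}^q\|$ for $i\notin I(j)$ persist on $\bar B(\bar{\mathbf{x}}^j,\varepsilon)$, deduces that $\varphi({\mathbf{x}}^j)=\frac{1}{m}\sum_{i\in I(j)}\|{\mathbf{a}}^i-{\mathbf{x}}^j\|^2$ has a local minimum at $\bar{\mathbf{x}}^j$, and applies the Fermat rule; your majorization ($h\le H$ everywhere, $h(\bar{\mathbf{x}}^j)=H(\bar{\mathbf{x}}^j)$, $H$ strictly convex) reaches the same conclusion with no $\varepsilon$-bookkeeping and with equidistant data points handled automatically, which is a genuine simplification. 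What the paper's local argument buys is reusability: it is applied verbatim to local solutions in Theorem \ref{thm_local_solutions}, whereas your chain $h(\bar{\mathbf{x}}^j)\le h(\mathbf{y}^*)$ invokes global minimality of $\bar{\mathbf{x}}^j$ in its block (though your argument also localizes, since a local minimum of $h$ is a local, hence global, minimum of the convex majorant $H$). The finiteness arguments essentially coincide for \eqref{DC_clustering_problem}; for \eqref{basic_clustering_problem} your direct argument (no empty clusters, so $\bar{\mathbf{x}}$ is determined by $\bar{\alpha}$) differs mildly from the paper's (which combines finiteness of the solution set of \eqref{DC_clustering_problem} with finiteness of the set of feasible $\bar{\alpha}$), but both are sound. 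One presentational point: nonemptiness of $I(j)$, which you establish only in your final paragraph via Proposition \ref{natural_clustering_property}, is needed already in the formula step, since $\mathbf{y}^*$ is well defined and $H$ strictly convex only when $I(j)\neq\emptyset$; since that nonemptiness argument is independent of the formula, this is a matter of reordering, not a gap.
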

\begin{proof} a) \textit{Solution existence:} By the second assertion of Proposition \ref{equivalence_prop}, it suffices to show that \eqref{DC_clustering_problem} has a solution. Since the minimum of finitely many continuous functions is a continuous function, the objective function of \eqref{DC_clustering_problem} is continuous on $\mathbb R^{n\times k}$. If $k=1$, then the formula for $f$ can be rewritten as $f({\mathbf{x}}^1)=\displaystyle\frac{1}{m}\displaystyle\sum_{i=1}^{m} \|{\mathbf{a}}^i-{\mathbf{x}}^1\|^2$. This smooth, strongly convex function attains its unique global minimum on $\mathbb R^n$ at the point $\bar {\mathbf{x}}^1={\mathbf{a}}^0$, where 
\begin{equation}\label{barycenter_A}
{\mathbf{a}}^0:=\frac{1}{m}\displaystyle\sum_{i\in I} {\mathbf{a}}^i
\end{equation} is the \textit{barycenter} of the data set $A$ (see, e.g., \cite[pp.~24--25]{LeeTamYen_book} for more details). To prove the solution existence of \eqref{DC_clustering_problem} for any $k\geq 2$, put 
$\rho=\displaystyle\max_{i\in I} \|{\mathbf{a}}^i-{\mathbf{a}}^0\|$, where ${\mathbf{a}}^0$ is defined by \eqref{barycenter_A}. Denote by $\bar B({\mathbf{a}}^0,2\rho)$ the closed ball in $\mathbb R^n$ centered at ${\mathbf{a}}^0$ with radius $2\rho$, and consider the optimization problem
\begin{equation}\label{aux_min_problem}
\min\Big\{f(\mathbf{x})\, \mid\, \mathbf{x}=({\mathbf{x}}^1,\dots,{\mathbf{x}}^k)\in \mathbb R^{n\times k},\; {\mathbf{x}}^j\in\bar B({\mathbf{a}}^0,2\rho),\; \forall j\in J\Big\}.
\end{equation} By the Weierstrass theorem, \eqref{aux_min_problem} has a solution $\bar{\mathbf{x}}=(\bar {\mathbf{x}}^1,\dots,\bar {\mathbf{x}}^k)$ with $\bar {\mathbf{x}}^j$ satisfying the inequality $\|\bar {\mathbf{x}}^j-{\mathbf{a}}^0\|\leq 2\rho$ for all $j\in J$. Take an arbitrary point $\mathbf{x}=({\mathbf{x}}^1,\dots,{\mathbf{x}}^k)\in\mathbb R^{n\times k}$ and notice by the choice of $\bar{\mathbf{x}}$ that $f(\bar{\mathbf{x}})\leq f(\mathbf{x})$ if $\|{\mathbf{x}}^j-{\mathbf{a}}^0\|\leq 2\rho$ for all $j\in J$. If there exists at least one index $j\in J$ with $\|{\mathbf{x}}^j-{\mathbf{a}}^0\|> 2\rho$, then denote the set of such indexes by $J_1$ and define a vector $\tilde{\mathbf{x}}=(\tilde {\mathbf{x}}^1,\dots,\tilde {\mathbf{x}}^k)\in\mathbb R^{n\times k}$ by putting $\tilde {\mathbf{x}}^j={\mathbf{x}}^j$ for every $j\in J\setminus J_1$, and $\tilde {\mathbf{x}}^j={\mathbf{a}}^0$ for all $j\in J_1$. For any $i\in I$, it is clear that $\|{\mathbf{a}}^i-\tilde {\mathbf{x}}^j\|=\|{\mathbf{a}}^i-{\mathbf{a}}^0\|\leq \rho<\|{\mathbf{a}}^i-{\mathbf{x}}^j\|$ for every $j\in J_1$, and $\|{\mathbf{a}}^i-\tilde {\mathbf{x}}^j\|=\|{\mathbf{a}}^i-{\mathbf{x}}^j\|$ for every $j\in J\setminus J_1$. So, we have $f(\tilde{\mathbf{x}})\leq f(\mathbf{x})$. As $f(\bar{\mathbf{x}})\leq f(\tilde{\mathbf{x}})$, this yields $f(\bar{\mathbf{x}})\leq f(\mathbf{x})$. We have thus proved that $\bar{\mathbf{x}}$ is a solution of \eqref{DC_clustering_problem}.

b) \textit{Finiteness of the solution sets and formulae for the solution components:} Suppose that ${\mathbf{a}}^1,...,{\mathbf{a}}^m$ are pairwise distinct points,   $\bar{\mathbf{x}}=(\bar {\mathbf{x}}^1,\dots,\bar {\mathbf{x}}^k)\in\mathbb R^{n\times k}$ is a global solution of \eqref{DC_clustering_problem}, and $\{A^1,\dots,A^k\}$ is the natural clustering associated with~$\bar{\mathbf{x}}$. By Proposition \ref{natural_clustering_property}, $A^j\neq\emptyset$ for all $j\in J$. Since 
$$A^j\subset \left\{{\mathbf{a}}^i\in A\mid \|{\mathbf{a}}^i-\bar {\mathbf{x}}^j\|=\displaystyle\min_{q\in J} \|{\mathbf{a}}^i-\bar {\mathbf{x}}^q\|\right\}$$ and $A^j\neq\emptyset$ for every $j\in J$, we see that $|I(j)|\geq 1$ for every $j\in J$. This implies that  right-hand-side of \eqref{solution_components} is well defined for each $j\in J$. To justify that formula, we can argue as follows. Fix any $j\in J$. Since
$$\|{\mathbf{a}}^i-\bar  {\mathbf{x}}^j\|>\min_{q\in J}\|{\mathbf{a}}^i-\bar {\mathbf{x}}^{q}\|\quad \forall i\notin I(j),$$
there exists $\varepsilon>0$ such that \begin{equation}\label{strict_ineq1}
\|{\mathbf{a}}^i-{\mathbf{x}}^j\|>\min_{q\in J}\|{\mathbf{a}}^i-\bar {\mathbf{x}}^{q}\|\quad \forall i\notin I(j)
\end{equation} for any ${\mathbf{x}}^j\in\bar B(\bar {\mathbf{x}}^j,\varepsilon)$. For each ${\mathbf{x}}^j\in\bar B(\bar {\mathbf{x}}^j,\varepsilon)$, put $\tilde{\mathbf{x}}=(\tilde {\mathbf{x}}^1,\dots,\tilde {\mathbf{x}}^k)$ with $\tilde {\mathbf{x}}^q:=\bar {\mathbf{x}}^q$ for every $q\in J\setminus\{j\}$ and $\tilde {\mathbf{x}}^j:={\mathbf{x}}^j$. From the inequality $f(\bar{\mathbf{x}})\leq f(\tilde{\mathbf{x}})$ and the validity of~\eqref{strict_ineq1} we can deduce that 
\begin{equation}\label{basic_evaluations}\begin{array}{rl}
f(\bar{\mathbf{x}})&=\displaystyle\frac{1}{m}\displaystyle\sum_{i=1}^{m}\left(\min_{q\in J} \|{\mathbf{a}}^i-\bar {\mathbf{x}}^q\|^2\right)\\
&=\displaystyle\frac{1}{m}\left[\displaystyle\sum_{i\in I(j)}\|{\mathbf{a}}^i-\bar {\mathbf{x}}^j\|^2+\displaystyle\sum_{i\in I\setminus I(j)}\left(\min_{q\in J} \|{\mathbf{a}}^i-\bar {\mathbf{x}}^q\|^2\right)\right]\\
& \leq f(\tilde{\mathbf{x}})\\
& = \displaystyle\frac{1}{m}\left[\displaystyle\sum_{i\in I(j)}\left(\min_{q\in J} \|{\mathbf{a}}^i-\tilde {\mathbf{x}}^q\|^2\right)+\displaystyle\sum_{i\in I\setminus I(j)}\left(\min_{q\in J} \|{\mathbf{a}}^i-\tilde {\mathbf{x}}^q\|^2\right)\right]\\
& =  \displaystyle\frac{1}{m}\left[\displaystyle\sum_{i\in I(j)}\left(\min_{q\in J} \|{\mathbf{a}}^i-\tilde {\mathbf{x}}^q\|^2\right)+\displaystyle\sum_{i\in I\setminus I(j)}\left(\min_{q\in J} \|{\mathbf{a}}^i-\bar {\mathbf{x}}^q\|^2\right)\right]\\
& \leq\displaystyle\frac{1}{m}\left[\displaystyle\sum_{i\in I(j)}\|{\mathbf{a}}^i-{\mathbf{x}}^j\|^2+\displaystyle\sum_{i\in I\setminus I(j)}\left(\min_{q\in J} \|{\mathbf{a}}^i-\bar {\mathbf{x}}^q\|^2\right)\right].
\end{array}
\end{equation}   Consider the function $\varphi({\mathbf{x}}^j):=\displaystyle\frac{1}{m}\displaystyle\sum_{i\in I(j)}\|{\mathbf{a}}^i-{\mathbf{x}}^j\|^2$, ${\mathbf{x}}^j\in\mathbb R^n$. Comparing the expression on the second line of \eqref{basic_evaluations} with the one on the sixth line yields $\varphi(\bar {\mathbf{x}}^j)\leq \varphi({\mathbf{x}}^j)$ for every ${\mathbf{x}}^j\in\bar B(\bar {\mathbf{x}}^j,\varepsilon)$. Hence $\varphi$ attains its local minimum at $\bar {\mathbf{x}}^j$. By the Fermat Rule we have $\nabla\varphi(\bar {\mathbf{x}}^j)=0$, which gives $\displaystyle\sum_{i\in I(j)}({\mathbf{a}}^i-\bar {\mathbf{x}}^j)=0$. This equality implies \eqref{solution_components}. Since there are only finitely many nonempty subsets $\Omega\subset I$, the set ${\mathcal B}$ of vectors $\mathbf{b}_\Omega$ defined by formula $\mathbf{b}_\Omega=\displaystyle\frac{1}{|\Omega|}\displaystyle\sum_{i\in \Omega} \mathbf{a}^i$ is finite. (Note that  $\mathbf{b}_\Omega$ is the barycenter of the subsystem $\{\mathbf{a}^i\in A\mid i\in\Omega\}$ of $A$.) According to \eqref{solution_components}, each component of a global solution $\bar{\mathbf{x}}=(\bar {\mathbf{x}}^1,\dots,\bar {\mathbf{x}}^k)$  of \eqref{DC_clustering_problem} must belongs to  ${\mathcal B}$, we can assert that the solution set of \eqref{DC_clustering_problem} is finite, provided that ${\mathbf{a}}^1,...,{\mathbf{a}}^m$ are pairwise distinct points.  By Proposition \ref{equivalence_prop}, if $(\bar{\mathbf{x}},\bar{\alpha})$ is a solution of \eqref{basic_clustering_problem}, then $\bar{\mathbf{x}}$ is a solution of \eqref{DC_clustering_problem}. Since $\bar{\alpha}=(\bar {\alpha}_{ij})\in \mathbb{R}^{m\times k}$ must satisfy the conditions $\bar {\alpha}_{ij}\in \{0, 1\}$ and $\displaystyle\sum_{j=1}^{k}\bar {\alpha}_{ij}=1$ for all $i\in I$, $j\in J$, it follows that the solution set of \eqref{basic_clustering_problem} is also finite. $\hfill\Box$
\end{proof}

\begin{proposition}\label{thm_basic_facts(1)} If $\bar{\mathbf{x}}=(\bar {\mathbf{x}}^1,\dots,\bar {\mathbf{x}}^k)\in\mathbb R^{n\times k}$ is a global solution of \eqref{DC_clustering_problem}, then the components of $\bar{\mathbf{x}}$ are pairwise distinct, i.e., $\bar {\mathbf{x}}^{j_1}\neq\bar {\mathbf{x}}^{j_2}$ whenever $j_2\neq j_1$.
\end{proposition}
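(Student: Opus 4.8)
The plan is to argue by contradiction, exploiting the index-ordered tie-breaking built into the natural clustering \eqref{natural_clustering} together with the nonemptiness guaranteed by Proposition \ref{natural_clustering_property}. Suppose the conclusion fails, so that $\bar{\mathbf{x}}^{j_1}=\bar{\mathbf{x}}^{j_2}$ for some pair of indices with, say, $j_1<j_2$. I would first record the immediate consequence that the two components then have identical attraction sets: since $\|\mathbf{a}^i-\bar{\mathbf{x}}^{j_1}\|=\|\mathbf{a}^i-\bar{\mathbf{x}}^{j_2}\|$ for every $i\in I$, Definition \ref{def_attraction_set} gives $A[\bar{\mathbf{x}}^{j_1}]=A[\bar{\mathbf{x}}^{j_2}]$.

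Next I would examine the cluster $A^{j_2}$ of the natural clustering $\{A^1,\dots,A^k\}$ associated with $\bar{\mathbf{x}}$, using the representation $A^{j}=A[\bar{\mathbf{x}}^{j}]\setminus\bigcup_{p=1}^{j-1}A^p$ noted right after Definition \ref{def_attraction_set}. The key observation is that every data point in $A[\bar{\mathbf{x}}^{j_1}]$ has $\bar{\mathbf{x}}^{j_1}$ among its nearest centroids and is therefore assigned, by the first-index selection rule in \eqref{natural_clustering}, to some cluster $A^{p}$ with $p\le j_1$. Hence $A[\bar{\mathbf{x}}^{j_1}]\subseteq\bigcup_{p=1}^{j_1}A^p\subseteq\bigcup_{p=1}^{j_2-1}A^p$, the last inclusion being where the strict inequality $j_1<j_2$ is used. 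Combining this with $A[\bar{\mathbf{x}}^{j_2}]=A[\bar{\mathbf{x}}^{j_1}]$ and the representation of $A^{j_2}$ yields $A^{j_2}=A[\bar{\mathbf{x}}^{j_2}]\setminus\bigcup_{p=1}^{j_2-1}A^p=\emptyset$, contradicting Proposition \ref{natural_clustering_property}. This contradiction forces all components of $\bar{\mathbf{x}}$ to be pairwise distinct.

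The points I would take most care to write out are both tied to the hypothesis that the data points $\mathbf{a}^1,\dots,\mathbf{a}^m$ are pairwise distinct: this is exactly what makes Proposition \ref{natural_clustering_property} applicable, and it is genuinely indispensable here, since if two data points coincide one checks directly that a global solution may have repeated components. I therefore expect the only real obstacle to be recognizing that this standing assumption must be in force (the argument breaks without it), the rest being the bookkeeping of the set inclusions above. As a self-contained alternative that sidesteps Proposition \ref{natural_clustering_property}, I could instead perturb the redundant component directly: relocating $\bar{\mathbf{x}}^{j_2}$ to a data point $\mathbf{a}^{i_1}$ that lies in a cluster of at least two points and differs from that cluster's centroid strictly decreases $f$, by the same estimate used in the proof of Proposition \ref{natural_clustering_property}. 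Since this route essentially re-derives that proposition, I would present the empty-cluster contradiction as the cleaner argument.
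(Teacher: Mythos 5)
Your proof is correct, but it takes a genuinely different route from the paper's. The paper argues by direct perturbation: if $\bar{\mathbf{x}}^{j_1}=\bar{\mathbf{x}}^{j_2}$, then at most $k-1$ distinct centroid values remain, so their attraction sets cover the $m\geq k$ pairwise distinct data points and, by pigeonhole, some attraction set $A[\bar{\mathbf{x}}^{j_0}]$ contains at least two points; choosing $\mathbf{a}^{i_0}\in A[\bar{\mathbf{x}}^{j_0}]$ with $\mathbf{a}^{i_0}\neq\bar{\mathbf{x}}^{j_0}$ and relocating the redundant component $\bar{\mathbf{x}}^{j_2}$ to $\mathbf{a}^{i_0}$ gives $f(\tilde{\mathbf{x}})-f(\bar{\mathbf{x}})\leq-\frac{1}{m}\|\mathbf{a}^{i_0}-\bar{\mathbf{x}}^{j_0}\|^2<0$, contradicting global optimality --- which is exactly the argument you sketch only as your ``self-contained alternative.'' Your primary argument instead converts the duplicated component into an empty cluster of the natural clustering (via the first-index tie-breaking rule in \eqref{natural_clustering} and the identity $A[\bar{\mathbf{x}}^{j_1}]=A[\bar{\mathbf{x}}^{j_2}]$) and then invokes Proposition \ref{natural_clustering_property}. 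This buys economy --- all the optimization content is delegated to the earlier proposition, and no new decrease estimate is needed --- and it isolates a clean structural fact (a repeated centroid forces an empty cluster); the paper's version buys independence from the tie-breaking bookkeeping, since it works with attraction sets only. You are also right, and it is worth stating explicitly, that pairwise distinctness of $\mathbf{a}^1,\dots,\mathbf{a}^m$ is indispensable: the statement as printed omits it, yet both proofs need it (the paper's pigeonhole step --- where ``As $k\leq n$'' is evidently a typo for ``As $k\leq m$'' --- fails without it), and the first example of Remark \ref{rem_degenerate_case} shows the conclusion is genuinely false when two data points coincide.
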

\begin{proof} On the contrary, suppose that there are distinct indexes $j_1,j_2\in J$ satisfying $\bar {\mathbf{x}}^{j_1}=\bar {\mathbf{x}}^{j_2}$. As $k\leq n$, one has $k-1<n$. So, there must exist $j_0\in J$ such that $|A[\bar {\mathbf{x}}^{j_0}]|\geq 2$. Therefore, one can find a data point ${\mathbf{a}}^{i_0}\in A[\bar {\mathbf{x}}^{j_0}]$ with ${\mathbf{a}}^{i_0}\neq \bar {\mathbf{x}}^{j_0}$. Setting $\tilde{\mathbf{x}}=(\tilde {\mathbf{x}}^1,\dots,\tilde {\mathbf{x}}^k)$ with $\tilde {\mathbf{x}}^j=\bar {\mathbf{x}}^j$ for every $j\in J\setminus\{j_2\}$ and  $\tilde {\mathbf{x}}^{j_2}={\mathbf{a}}^{i_0}$. The construction of $\tilde{\mathbf{x}}$ yields 
\begin{equation*}\label{non-optimal(1)}
f(\tilde{\mathbf{x}})-f(\bar{\mathbf{x}})\leq -\frac{1}{m}\|{\mathbf{a}}^{i_0}-\bar {\mathbf{x}}^{j_0}\|^2<0, \end{equation*} which is impossible because $\bar{\mathbf{x}}$ is a global solution of \eqref{DC_clustering_problem}. $\hfill\Box$
\end{proof} 

\begin{remark}\label{rem_degenerate_case} {\rm If the points ${\mathbf{a}}^1,...,{\mathbf{a}}^m$ are not pairwise distinct, then the conclusions of Theorem \ref{thm_basic_facts} do not hold in general. Indeed, let $A=\{{\mathbf{a}}^1, {\mathbf{a}}^2\}\subset\mathbb R^2$ with ${\mathbf{a}}^1={\mathbf{a}}^2$. For $k:=2$, let $\bar{\mathbf{x}}=(\bar {\mathbf{x}}^1,\bar {\mathbf{x}}^2)$ with $\bar {\mathbf{x}}^1={\mathbf{a}}^1$ and $\bar {\mathbf{x}}^2\in\mathbb R^2$ being chosen arbitrarily. Since $f(\bar{\mathbf{x}})=0$, we can conclude that $\bar{\mathbf{x}}$ is a global solution of \eqref{DC_clustering_problem}. So, the problem has an unbounded solution set. Similarly, for a data set $A=\{{\mathbf{a}}^1,\dots, {\mathbf{a}}^4\}\subset\mathbb R^2$ with ${\mathbf{a}}^1={\mathbf{a}}^2$, ${\mathbf{a}}^3={\mathbf{a}}^4$, and ${\mathbf{a}}^2\neq {\mathbf{a}}^3$. For $k:=3$, let $\bar{\mathbf{x}}=(\bar {\mathbf{x}}^1,\bar {\mathbf{x}}^2,\bar {\mathbf{x}}^3)$ with $\bar {\mathbf{x}}^1={\mathbf{a}}^1$, $\bar {\mathbf{x}}^2={\mathbf{a}}^3$, and $\bar {\mathbf{x}}^3\in\mathbb R^2$ being chosen arbitrarily. By the equality $f(\bar{\mathbf{x}})=0$ we can assert that $\bar{\mathbf{x}}$ is a global solution of \eqref{DC_clustering_problem}. This shows that the solution set of \eqref{DC_clustering_problem} is unbounded. Notice also that, if $\bar {\mathbf{x}}^3\notin\{\bar {\mathbf{x}}^1,\bar {\mathbf{x}}^2\}$, then formula \eqref{solution_components} cannot be applied to $\bar {\mathbf{x}}^3$, because the index set $I(3)=\left\{i\in I\mid {\mathbf{a}}^i\in A[\bar {\mathbf{x}}^3]\right\}=\left\{i\in I\mid \|{\mathbf{a}}^i-\bar {\mathbf{x}}^3\|=\displaystyle\min_{q\in J} \|{\mathbf{a}}^i-\bar {\mathbf{x}}^q\|\right\}$ is empty.}	
\end{remark}

Formula \eqref{solution_components} is effective for computing certain components of any given \textit{local solution} of  \eqref{DC_clustering_problem}. The precise statement of this result is as follows.

\begin{theorem}\label{thm_local_solutions}
	If $\bar{\mathbf{x}}=(\bar {\mathbf{x}}^1,\dots,\bar {\mathbf{x}}^k)\in\mathbb R^{n\times k}$ is a local solution of \eqref{DC_clustering_problem}, then \eqref{solution_components} is valid for all $j\in J$ whose index set $I(j)$ is nonempty, i.e., the component $\bar {\mathbf{x}}^j$ of $\bar{\mathbf{x}}$ is attractive w.r.t. the data set $A$. 
\end{theorem}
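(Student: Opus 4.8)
The plan is to recognize that the argument establishing the formula \eqref{solution_components} in part (b) of the proof of Theorem \ref{thm_basic_facts} is \emph{purely local}: global optimality of $\bar{\mathbf{x}}$ was invoked there only through Proposition \ref{natural_clustering_property}, to guarantee that every natural cluster is nonempty. Here I drop any distinctness or global-optimality hypothesis and instead restrict attention to those indices $j\in J$ for which $I(j)$ is nonempty. For such a $j$ one has $|I(j)|\geq 1$, so the right-hand side of \eqref{solution_components} is well defined, and it suffices to derive the stationarity condition $\sum_{i\in I(j)}(\mathbf{a}^i-\bar{\mathbf{x}}^j)=0$.

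First I would fix an index $j$ with $I(j)\neq\emptyset$ and reproduce the strict separation \eqref{strict_ineq1}. By the definition of $A[\bar{\mathbf{x}}^j]$ in Definition \ref{def_attraction_set}, every $i\notin I(j)$ satisfies $\|\mathbf{a}^i-\bar{\mathbf{x}}^j\|>\min_{q\in J}\|\mathbf{a}^i-\bar{\mathbf{x}}^q\|$, the minimum being attained at some index $q\neq j$. By continuity there exists $\varepsilon>0$ such that \eqref{strict_ineq1} holds for every $\mathbf{x}^j\in\bar B(\bar{\mathbf{x}}^j,\varepsilon)$; shrinking $\varepsilon$ if necessary, I may also assume that every such perturbation produces a system lying inside the neighborhood of $\bar{\mathbf{x}}$ on which local optimality holds.

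Next, for each $\mathbf{x}^j\in\bar B(\bar{\mathbf{x}}^j,\varepsilon)$ I would form the perturbed system $\tilde{\mathbf{x}}$ that agrees with $\bar{\mathbf{x}}$ except in the $j$-th block, where it takes the value $\mathbf{x}^j$. Since $\tilde{\mathbf{x}}$ differs from $\bar{\mathbf{x}}$ only in the $j$-th component, $\|\tilde{\mathbf{x}}-\bar{\mathbf{x}}\|\leq\varepsilon$, and local optimality gives $f(\bar{\mathbf{x}})\leq f(\tilde{\mathbf{x}})$. Feeding this into the chain of evaluations \eqref{basic_evaluations} --- where \eqref{strict_ineq1} leaves every term with $i\in I\setminus I(j)$ unchanged, while each term with $i\in I(j)$ is bounded above by $\|\mathbf{a}^i-\mathbf{x}^j\|^2$ --- and cancelling the common tail over $I\setminus I(j)$ yields $\varphi(\bar{\mathbf{x}}^j)\leq\varphi(\mathbf{x}^j)$ for all $\mathbf{x}^j\in\bar B(\bar{\mathbf{x}}^j,\varepsilon)$, where $\varphi(\mathbf{x}^j)=\frac{1}{m}\sum_{i\in I(j)}\|\mathbf{a}^i-\mathbf{x}^j\|^2$. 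Thus $\varphi$ attains a local minimum at $\bar{\mathbf{x}}^j$, and the Fermat rule gives $\nabla\varphi(\bar{\mathbf{x}}^j)=0$, i.e. $\sum_{i\in I(j)}(\mathbf{a}^i-\bar{\mathbf{x}}^j)=0$, which is exactly \eqref{solution_components}.

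The main point requiring care --- though it is not deep --- is the bookkeeping in the middle of the chain \eqref{basic_evaluations}: for $i\in I(j)$ I only have the one-sided estimate $\min_{q\in J}\|\mathbf{a}^i-\tilde{\mathbf{x}}^q\|^2\leq\|\mathbf{a}^i-\mathbf{x}^j\|^2$ (the minimum for the perturbed system need not be realized at $q=j$), but this inequality points in the correct direction, so the comparison still forces $\varphi(\bar{\mathbf{x}}^j)\leq\varphi(\mathbf{x}^j)$. I expect no genuine obstacle beyond verifying that the local hypothesis is strong enough to license the single inequality $f(\bar{\mathbf{x}})\leq f(\tilde{\mathbf{x}})$ that drives the entire computation.
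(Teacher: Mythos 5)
Your proposal is correct and follows exactly the paper's own route: the paper proves this theorem by re-applying the argument from part (b) of the proof of Theorem \ref{thm_basic_facts}, observing (as you do) that the inequality $f(\bar{\mathbf{x}})\leq f(\tilde{\mathbf{x}})$ only needs to hold for perturbations of the single component $\bar{\mathbf{x}}^j$ within a sufficiently small ball, which local optimality provides. Your additional remarks --- the one-sided estimate for $i\in I(j)$ and the restriction to indices with $I(j)\neq\emptyset$ replacing the appeal to Proposition \ref{natural_clustering_property} --- are precisely the bookkeeping the paper leaves implicit.
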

\begin{proof}  It suffices to re-apply the arguments described in the second part of the proof of Theorem \ref{thm_basic_facts}, noting that $f(\bar{\mathbf{x}})\leq f(\tilde{\mathbf{x}})$ if ${\mathbf{x}}^j$ (the $j$-th component of $\tilde{\mathbf{x}}$) is taken from $\bar B(\bar {\mathbf{x}}^j,\varepsilon')$ with $\varepsilon'\in (0,\varepsilon)$ being small enough.  $\hfill\Box$
\end{proof} 

As in the proof of Theorem \ref{thm_basic_facts}, if $\Omega=\{{\mathbf{a}}^{i_1},\dots,{\mathbf{a}}^{i_r}\}\subset A$ is a nonempty subset, then we put  $\mathbf{b}_\Omega=\displaystyle\frac{1}{r}\sum_{l=1}^{r}{\mathbf{a}}^{i_l}$. Recall that the set of such points $\mathbf{b}_\Omega$ has been denoted by ${\mathcal B}$.

\begin{remark} {\rm Theorem \ref{thm_basic_facts} shows that if the points ${\mathbf{a}}^1,...,{\mathbf{a}}^m$ are pairwise distinct, then every component of a global solution must belong to ${\mathcal B}$. It is clear that ${\mathcal B}\subset {\rm co}A$, where ${\rm co}A$ abbreviates the \textit{convex hull} of $A$. Looking back to the proof of Theorem~\ref{thm_basic_facts}, we see that the set $A$ lies in the ball $\bar B({\mathbf{a}}^0,\rho)$. Hence ${\mathcal B}\subset {\rm co}A\subset\bar B({\mathbf{a}}^0,\rho)$. It follows that \textit{the global solutions of \eqref{DC_clustering_problem} are contained in the set}
		$$\Big\{\mathbf{x}=({\mathbf{x}}^1,\dots,{\mathbf{x}}^k)\in \mathbb R^{n\times k}\,\mid\, {\mathbf{x}}^j\in\bar B({\mathbf{a}}^0,\rho),\; \forall j\in J\Big\},$$ \textit{provided the points ${\mathbf{a}}^1,...,{\mathbf{a}}^m$ are pairwise distinct}. Similarly, Theorem \ref{thm_local_solutions} assures that each attractive component of a local solution of~\eqref{DC_clustering_problem} belongs to ${\mathcal B}$, where ${\mathcal B}\subset {\rm co}A\subset \bar B({\mathbf{a}}^0,\rho)$.}
\end{remark}

\begin{remark}\label{permutations_are_allowed} {\rm If $\bar{\mathbf{x}}=(\bar {\mathbf{x}}^1,\dots,\bar {\mathbf{x}}^k)\in\mathbb R^{n\times k}$ is a global solution (resp., a local solution) of \eqref{DC_clustering_problem} then, for any permutation $\sigma$ of $J$, the vector $\bar {\mathbf{x}}^{\sigma}:=(\bar {\mathbf{x}}^{\sigma(1)},\dots,\bar {\mathbf{x}}^{\sigma(k)})$ is also  a global solution (resp., a local solution) of \eqref{DC_clustering_problem}. This observation follows easily from the fact that $f(\mathbf{x})=f({\mathbf{x}}^\sigma)$, where $\mathbf{x}=({\mathbf{x}}^1,\dots,{\mathbf{x}}^k)\in\mathbb R^{n\times k}$ and ${\mathbf{x}}^{\sigma}:=({\mathbf{x}}^{\sigma(1)},\dots,{\mathbf{x}}^{\sigma(k)})$.}
\end{remark}

To understand the importance of the above results and those to be established in next two sections, let us recall first the $k$-means clustering algorithm.

\medskip
\textbf{The $k$-means Algorithm}: Despite its ineffectiveness, the $k$-means clustering algorithm (see, e.g., \cite[pp.~89--90]{Aggarwal 2014}, \cite{Jain 2010}, \cite[pp.~263--266]{Kantardzic 2011}, and \cite{MacQueen 1967}) is one of the most popular solution methods for \eqref{DC_clustering_problem}. One starts with selecting $k$ points $ {\mathbf{x}}^1,\dots,{\mathbf{x}}^k$ in $\mathbb R^n$ as the initial centroids. Then one inductively constructs~$k$ subsets $A^1,\dots,A^k$ of the data set $A$ by putting $A^0=\emptyset$ and using the rule \eqref{natural_clustering}, where ${\mathbf{x}}^j$ plays the role of $\bar {\mathbf{x}}^j$ for all $j\in J$. This means that $\{A^1,\dots,A^k\}$ is the natural clustering associated with $\mathbf{x}=({\mathbf{x}}^1,\dots,{\mathbf{x}}^k)$. Once the clusters are formed, for each $j\in J$, if $A^j\neq\emptyset$ then the centroid ${\mathbf{x}}^j$ is updated by the rule 
\begin{equation}\label{new_centroids}
{\mathbf{x}}^j\ \; \leftarrow\ \; \widetilde {\mathbf{x}}^j:=\frac{1}{|I(A^j)|}\, \displaystyle\sum_{i\in I(A^j)} {\mathbf{a}}^i
\end{equation} with $I(A^j):=\left\{i\in I\mid {\mathbf{a}}^i\in A^j\right\}$; and ${\mathbf{x}}^j$ does not change otherwise. The algorithm iteratively repeats the procedure until the centroid system $\{{\mathbf{x}}^1,\dots,{\mathbf{x}}^k\}$ is stable, i.e., $\widetilde {\mathbf{x}}^j={\mathbf{x}}^j$ for all $j\in J$ with  $A^j\neq\emptyset$. The computation procedure is described as follows. 

\medskip
\noindent
\rule[0.05cm]{11.85cm}{0.01cm}\\
\textbf{Input}: The data set $A = \{{\mathbf{a}}^1,...,{\mathbf{a}}^m\}$ and a constant $\varepsilon\geq 0$ (tolerance).\\
\textbf{Output}: The set of $k$ centroids $\{{\mathbf{x}}^1,...,{\mathbf{x}}^k\}$.\\
\textit{Step 1}. Select initial centroids ${\mathbf{x}}^j \in \mathbb{R}^n$ for all $j\in J$.\\
\textit{Step 2}. Compute $ \alpha_i=\min\{\|{\mathbf{a}}^i-{\mathbf{x}}^j\| \mid j\in J\}$ for all $i\in I$.\\
\textit{Step 3}.  Form the clusters $A^1,\dots,A^k$:\\
- Find the attraction sets 
$$A[{\mathbf{x}}^j]=\left\{{\mathbf{a}}^i\in A\; \mid\; \|{\mathbf{a}}^i-{\mathbf{x}}^j\|= \alpha_i\right\}\quad\; (j\in J);$$ 
- Set $A^1=A[{\mathbf{x}}^1]$ and 
\begin{equation}\label{natural_clusters}
A^j=A[{\mathbf{x}}^j]\setminus \left(\bigcup_{p=1}^{j-1} A^p\right)\quad\; (j=2,\dots,k).
\end{equation}
\textit{Step 4}. Update the centroids ${\mathbf{x}}^j$ satisfying $A^j\neq\emptyset$ by the rule \eqref{new_centroids}, keeping other centroids unchanged.\\
\textit{Step 5}. Check the convergence condition: 
If $\|\widetilde{\mathbf{x}}^j-{\mathbf{x}}^j\|\leq\varepsilon$ for all $j\in J$ with $A^j\neq\emptyset$ then stop, else go to \textit{Step 2}.\\
\rule[0.05cm]{11.85cm}{0.01cm}

\medskip
The following example is designed to show how the algorithm is performed in practice.	

\begin{example}\label{Kmeans_example} {\rm Choose $m=3$, $n=2$, and $k=2$. Let $A=\{{\mathbf{a}}^1,{\mathbf{a}}^2,{\mathbf{a}}^3\}$, where ${\mathbf{a}}^1 = (0,0),\, {\mathbf{a}}^2 = (1,0)$, ${\mathbf{a}}^3 = (0,1)$. Apply the $k$-means algorithm to solve the problem \eqref{DC_clustering_problem}  with the tolerance $\varepsilon= 0$. 
		
		(a) With the starting centroids ${\mathbf{x}}^1 ={\mathbf{a}}^1$, ${\mathbf{x}}^2={\mathbf{a}}^2$, one obtains the clusters $A^1=A[{\mathbf{x}}^1]=\{{\mathbf{a}}^1,{\mathbf{a}}^3\}$ and $A^2=A[{\mathbf{x}}^2]=\{{\mathbf{a}}^2\}$. The updated  centroids are ${\mathbf{x}}^1=(0,\frac{1}{2})$, ${\mathbf{x}}^2={\mathbf{a}}^2$. Then, the new clusters $A^1$ and $A^2$ coincide with the old ones. Thus,  $\|\widetilde{\mathbf{x}}^j-{\mathbf{x}}^j\|=0$ for all $j\in J$ with $A^j\neq\emptyset$. So, the computation terminates. For ${\mathbf{x}}^1=(0,\frac{1}{2})$, ${\mathbf{x}}^2={\mathbf{a}}^2$, one has $f(\mathbf{x}) = \frac{1}{6}$.
		
		(b) Starting with the points ${\mathbf{x}}^1 = (\frac{1}{4},\frac{3}{4})$ and ${\mathbf{x}}^2=(2,3)$, one gets the clusters $A^1=A[{\mathbf{x}}^1]=\{{\mathbf{a}}^1, {\mathbf{a}}^2,{\mathbf{a}}^3\}$ and $A^2=A[{\mathbf{x}}^2]=\emptyset$. The algorithm gives the centroid system ${\mathbf{x}}^1=(\frac{1}{3},\frac{1}{3})$, ${\mathbf{x}}^2=(2,3)$, and $f(\mathbf{x}) =\frac{1}{3}$.
		
		(c) Starting with ${\mathbf{x}}^1 = (0,1)$ and ${\mathbf{x}}^2=(0,0)$, by the algorithm we are led to $A^1=A[{\mathbf{x}}^1]=\{{\mathbf{a}}^3\}$, $A^2=A[{\mathbf{x}}^2]=\{{\mathbf{a}}^1, {\mathbf{a}}^2\}$, ${\mathbf{x}}^1=(0,1)$, and ${\mathbf{x}}^2=(\frac{1}{2},0)$. The corresponding value of objective function is $f(\mathbf{x}) = \frac{1}{6}$.
		
		(d) Starting with ${\mathbf{x}}^1 = (0,0)$ and ${\mathbf{x}}^2=(\frac{1}{2},\frac{1}{2})$, by the algorithm one gets the results $A^1=A[{\mathbf{x}}^1]=\{{\mathbf{a}}^1\}$, $A^2=A[{\mathbf{x}}^2]=\{{\mathbf{a}}^2, {\mathbf{a}}^3\}$, ${\mathbf{x}}^1=(0,0)$, and ${\mathbf{x}}^2=(\frac{1}{2},\frac{1}{2})$. The corresponding value of objective function is $f(\mathbf{x}) = \frac{4}{9}$.
		
		(e) With ${\mathbf{x}}^1 = (\frac{1}{3},\frac{1}{3})$ and ${\mathbf{x}}^2=(1+\frac{\sqrt{5}}{3},0)$ as the initial centroids, one obtains the results $A^1=A[{\mathbf{x}}^1]=\{{\mathbf{a}}^1,{\mathbf{a}}^2,{\mathbf{a}}^3\}$, $A^2=A[{\mathbf{x}}^2]=\emptyset$, ${\mathbf{x}}^1=(\frac{1}{3},\frac{1}{3})$, ${\mathbf{x}}^2=(1+\frac{\sqrt{5}}{3},0)$, and $f(\mathbf{x}) = \frac{4}{9}$.}
\end{example}	

\textit{Based on the existing knowledge on the MSSC problem and the $k$-means clustering algorithm, one cannot know whether the five centroid systems obtained in the items} (a)--(e) \textit{of Example \ref{Kmeans_example} contain a global optimal solution of the clustering problem, or not. Even if one knows that the centroid systems obtained in} (a) \textit{and} (c) \textit{are global optimal solutions, one still cannot say definitely whether the centroid systems obtained in the items} (b), (d), (e) \textit{are local optimal solutions of \eqref{DC_clustering_problem}, or not.}

\medskip
The theoretical results in this and the forthcoming sections allow us to clarify the following issues related to the MSSC problem in Example \ref{Kmeans_example}:

- The structure of the global solution set (see Example \ref{basic_example} below);

- The structure of the local solution set (see Example \ref{local_solutions_example});

- The performance of the $k$-means algorithm (see Example \ref{convergence_analysis_example}).

In particular, it will be shown that the centroid systems in (a) and (c) are global optimal solutions, the centroid systems in (b) and (d) are local-nonglobal optimal solutions, while the centroid system in (e) is not a local solution (despite the fact that the centroid systems generated by the $k$-means algorithm converge to it, and the value of the objective function at it equals to the value given by the centroid system in (d)).

\section{Characterizations of the Local Solutions}

In order to study the local solution set of \eqref{DC_clustering_problem} in more details, we will follow Ordin and Bagirov~\cite{Ordin_Bagirov_2015} to consider the problem in light of a well-known optimality condition in DC programming. For every $\mathbf{x}= ({\mathbf{x}}^1,..., {\mathbf{x}}^k)\in\mathbb R^{n\times k}$, we have
\begin{equation}\begin{array}{rl}
f(\mathbf{x}) & =\displaystyle\frac{1}{m}\sum_{i\in I}\left(\min_{j\in J} \|{\mathbf{a}}^i-{\mathbf{x}}^j\|^2\right)\\
& =\displaystyle\frac{1}{m}\sum_{i\in I}\left[\left(\sum_{j\in J}\|{\mathbf{a}}^i-{\mathbf{x}}^j\|^2\right)-\left(\max_{j\in J}\sum_{q\in J\setminus\{j\}}\|{\mathbf{a}}^q-{\mathbf{x}}^j\|^2\right)\right].
\end{array}
\end{equation}
Hence, the objective function $f$ of \eqref{DC_clustering_problem} can be expressed \cite[p.~345]{Ordin_Bagirov_2015} as the difference of two convex functions
\begin{eqnarray}\label{DC function fkx}
f(\mathbf{x}) = f^1(\mathbf{x}) - f^2(\mathbf{x}),
\end{eqnarray} 
where
\begin{eqnarray}\label{DC function fk1}
f^1(\mathbf{x}) := \displaystyle\frac{1}{m}\sum_{i\in I}\left(\sum_{j\in J}\|{\mathbf{a}}^i-{\mathbf{x}}^j\|^2\right)
\end{eqnarray} 
and
\begin{eqnarray}\label{DC function fk2}
f^2(\mathbf{x}) := \frac{1}{m}\sum_{i\in I}\left(\max_{j\in J}\sum_{q\in J\setminus\{j\} }\|{\mathbf{a}}^i-{\mathbf{x}}^q\|^2\right).
\end{eqnarray} 
It is clear that $f^1$ is a convex linear-quadratic function. In particular, it is differentiable. As the sum of finitely many nonsmooth convex functions, $f^2$ is a nonsmooth convex function, which is defined on the whole space $\mathbb R^{n\times k}$. The subdifferentials of $f^1(\mathbf{x})$ and $f^2(\mathbf{x})$ can be computed as follows. First, one has $$\begin{array}{rl}
\partial f^1(\mathbf{x})=\{\nabla f^1(\mathbf{x}) \}& =\left\{\displaystyle\frac{2}{m}\sum_{i\in I} \left({\mathbf{x}}^1-{\mathbf{a}}^i,\dots,{\mathbf{x}}^k-{\mathbf{a}}^i\right)\right\}\\ &= \{2({\mathbf{x}}^1-{\mathbf{a}}^0,\dots,{\mathbf{x}}^k-{\mathbf{a}}^0)\}\end{array}$$
where, as before, ${\mathbf{a}}^0=\mathbf{b}_A$ is the barycenter of the system $\{{\mathbf{a}}^1,\dots,{\mathbf{a}}^m\}$. 
Set
\begin{eqnarray}\label{Phi_i function}
\varphi_i(\mathbf{x}) = \max_{j\in J}\, h_{i,j}(\mathbf{x})
\end{eqnarray} with  $h_{i,j}(\mathbf{x}):=\displaystyle\sum_{q\in J\setminus\{j\}}\|{\mathbf{a}}^i-{\mathbf{x}}^q\|^2$
and
\begin{eqnarray}\label{J_i function}
J_i(\mathbf{x}) = \left\{j\in J\,\mid\, h_{i,j}(\mathbf{x})=\varphi_i(\mathbf{x})\right\}.
\end{eqnarray} 

\begin{proposition}\label{meaning_of_Ji(x)} One has
	\begin{eqnarray}\label{Jix_formula}
	J_i(\mathbf{x}) = \left\{j\in J\,\mid\, {\mathbf{a}}^i\in A[{\mathbf{x}}^j]\right\}.
	\end{eqnarray} 
\end{proposition}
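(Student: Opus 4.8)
The plan is to rewrite each $h_{i,j}(\mathbf{x})$ in terms of the full sum of squared distances and then observe that maximizing over $j$ corresponds exactly to minimizing the deleted term. First I would introduce the abbreviation $S_i(\mathbf{x}):=\sum_{q\in J}\|{\mathbf{a}}^i-{\mathbf{x}}^q\|^2$ for the total sum over all centroids. Since $h_{i,j}(\mathbf{x})=\sum_{q\in J\setminus\{j\}}\|{\mathbf{a}}^i-{\mathbf{x}}^q\|^2$ differs from $S_i(\mathbf{x})$ only by the missing index $q=j$, one has the identity $h_{i,j}(\mathbf{x})=S_i(\mathbf{x})-\|{\mathbf{a}}^i-{\mathbf{x}}^j\|^2$, valid for every $j\in J$.

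Next I would take the maximum over $j$. Because $S_i(\mathbf{x})$ does not depend on the index being maximized, it factors out of the maximization, yielding $\varphi_i(\mathbf{x})=\max_{j\in J} h_{i,j}(\mathbf{x})=S_i(\mathbf{x})-\min_{j\in J}\|{\mathbf{a}}^i-{\mathbf{x}}^j\|^2$. The crucial sign flip here, namely that subtracting the $j$-th term converts the maximum of $h_{i,j}$ into a minimum of $\|{\mathbf{a}}^i-{\mathbf{x}}^j\|^2$, is the heart of the argument.

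Finally, I would chain these two observations to characterize membership in $J_i(\mathbf{x})$. An index $j$ lies in $J_i(\mathbf{x})$ precisely when $h_{i,j}(\mathbf{x})=\varphi_i(\mathbf{x})$, which by the two identities above is equivalent to $S_i(\mathbf{x})-\|{\mathbf{a}}^i-{\mathbf{x}}^j\|^2=S_i(\mathbf{x})-\min_{q\in J}\|{\mathbf{a}}^i-{\mathbf{x}}^q\|^2$, hence to $\|{\mathbf{a}}^i-{\mathbf{x}}^j\|^2=\min_{q\in J}\|{\mathbf{a}}^i-{\mathbf{x}}^q\|^2$. Since squaring is strictly monotone on the nonnegative reals, this last equality is the same as $\|{\mathbf{a}}^i-{\mathbf{x}}^j\|=\min_{q\in J}\|{\mathbf{a}}^i-{\mathbf{x}}^q\|$, which is exactly the condition ${\mathbf{a}}^i\in A[{\mathbf{x}}^j]$ from Definition \ref{def_attraction_set}. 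This establishes \eqref{Jix_formula}. The proof is essentially a bookkeeping identity, so I do not anticipate any genuine obstacle; the only point requiring care is the cancellation of the common term $S_i(\mathbf{x})$ and the attendant conversion of the maximum into a minimum.
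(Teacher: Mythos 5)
Your proposal is correct and follows essentially the same route as the paper's own proof: both rewrite $h_{i,j}(\mathbf{x})$ as the full sum $\sum_{q\in J}\|{\mathbf{a}}^i-{\mathbf{x}}^q\|^2$ minus the deleted term, so that maximizing $h_{i,j}$ over $j$ becomes minimizing $\|{\mathbf{a}}^i-{\mathbf{x}}^j\|^2$, and then identify the maximizing indices with the attraction-set condition of Definition \ref{def_attraction_set}. The only cosmetic difference is your explicit abbreviation $S_i(\mathbf{x})$ and the remark on monotonicity of squaring, both of which the paper leaves implicit.
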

\begin{proof} From the formula of $h_{i,j}(\mathbf{x})$ it follows that
\begin{eqnarray*} h_{i,j}(\mathbf{x})=\left(\displaystyle\sum_{q\in J}\|{\mathbf{a}}^i-{\mathbf{x}}^q\|^2\right)-\|{\mathbf{a}}^i-{\mathbf{x}}^j\|^2.
\end{eqnarray*} Therefore, by \eqref{Phi_i function}, we have
\begin{eqnarray*}\begin{array}{rl}
		\varphi_i(\mathbf{x}) & = \displaystyle\max_{j\in J}\, \left[ \left(\displaystyle\sum_{q\in J}\|{\mathbf{a}}^i-{\mathbf{x}}^q\|^2\right)-\|{\mathbf{a}}^i-{\mathbf{x}}^j\|^2 \right]\\
		& = \left(\displaystyle\sum_{q\in J}\|{\mathbf{a}}^i-{\mathbf{x}}^q\|^2\right) + \displaystyle\max_{j\in J}\, \left(-\|{\mathbf{a}}^i-{\mathbf{x}}^j\|^2\right)\\
		& = \left(\displaystyle\sum_{q\in J}\|{\mathbf{a}}^i-{\mathbf{x}}^q\|^2\right) - \displaystyle\min_{j\in J}\, \|{\mathbf{a}}^i-{\mathbf{x}}^j\|^2.
	\end{array}
\end{eqnarray*}
Thus, the maximum in \eqref{Phi_i function} is attained when the minimum $\displaystyle\min_{j\in J}\, \|{\mathbf{a}}^i-{\mathbf{x}}^j\|^2$ is achieved. So, by \eqref{J_i function},
\begin{eqnarray*}
	J_i(\mathbf{x}) = \left\{j\in J\,\mid\, \|{\mathbf{a}}^i-{\mathbf{x}}^j\|=\displaystyle\min_{q\in J}\, \|{\mathbf{a}}^i-{\mathbf{x}}^q\|\right\}.
\end{eqnarray*}
This implies \eqref{Jix_formula}.   $\hfill\Box$
\end{proof} 

Invoking the subdifferential formula for the maximum function (see \cite[Proposition~2.3.12]{Clarke_1990} and note that the Clarke generalized gradient coincides with the subdifferential of convex analysis if the functions in question are convex), we have 
\begin{eqnarray}\label{Subdifferential Phi_i function}
\partial\varphi_i(\mathbf{x}) = {\rm co}\left\{\nabla h_{i,j}(\mathbf{x})\, |\, j\in J_i(\mathbf{x})\right\} = {\rm co}\left\{2\left(\widetilde{\mathbf{x}}^j - \widetilde{\mathbf{a}}^{i,j}\right)\,\mid\, j\in J_i(\mathbf{x})\right\},
\end{eqnarray} 
where
\begin{eqnarray}\label{x_tilde}\widetilde{\mathbf{x}}^j = \left({\mathbf{x}}^1,\dots,{\mathbf{x}}^{j-1},\mathbf{0}_{\mathbb R^n},{\mathbf{x}}^{j+1},\dots,{\mathbf{x}}^k\right)\end{eqnarray}
and \begin{eqnarray}\label{a_tilde}\widetilde{\mathbf{a}}^{i,j} = \left({\mathbf{a}}^i,\dots,{\mathbf{a}}^i,\underbrace{\mathbf{0}_{\mathbb R^n}}_{j-{\rm th\ position}},{\mathbf{a}}^i,\dots,{\mathbf{a}}^i\right).\end{eqnarray}
By the Moreau-Rockafellar theorem \cite[Theorem~23.8]{Rockafellar_1970}, one has
\begin{eqnarray}\label{Subdifferential fk2}
\partial f^2(\mathbf{x}) = \frac{1}{m}\sum_{i\in I} \partial\varphi_i(\mathbf{x})
\end{eqnarray} with $\partial\varphi_i(\mathbf{x})$ being computed by \eqref{Subdifferential Phi_i function}. 

\medskip
Now, suppose $\mathbf{x} = ({\mathbf{x}}^1,..., {\mathbf{x}}^k)\in\mathbb R^{n\times k}$ is a local solution of \eqref{DC_clustering_problem}. By the necessary optimality condition in DC programming (see, e.g., \cite{NTVHang_Yen_JOTA2016} and \cite{PhamDinh_LeThi_AMV97}), which can be considered as a consequence of the optimality condition obtained by Dem'yanov et al. in
quasidifferential calculus (see, e.g.,  \cite[Theorem 3.1]{Dem'yanov_Rubinov_1995} and \cite[Theorem~5.1]{Dem'yanov_Vasil'ev_1985}), we have 
\begin{equation}\label{Optimality_condition_DC 1a}
\partial f^2(\mathbf{x})\subset 	\partial f^1(\mathbf{x}).
\end{equation}
Since $\partial f^1(\mathbf{x})$ is a singleton, $\partial f^2(\mathbf{x})$ must be a singleton too. This happens if and only if $\partial\varphi_i(\mathbf{x})$ is a singleton for every $i\in I$. By \eqref{Subdifferential Phi_i function}, if $|J_i(\mathbf{x})|=1$, then $|\partial\varphi_i(\mathbf{x})|=1$. In the case where $|J_i(\mathbf{x})|>1$, we can select two elements $j_1$ and $j_2$ from $J_i(\mathbf{x})$, $j_1<j_2$. As $\partial\varphi_i(\mathbf{x})$ is a singleton, by  \eqref{Subdifferential Phi_i function} one must have $\widetilde{\mathbf{x}}^{j_1} - \widetilde{\mathbf{a}}^{i,j_1}=\widetilde{\mathbf{x}}^{j_2} - \widetilde{\mathbf{a}}^{i,j_2}$. Using \eqref{x_tilde} and \eqref{a_tilde}, one sees that the latter occurs if and only if ${\mathbf{x}}^{j_1}={\mathbf{x}}^{j_2}={\mathbf{a}}^i$. To proceed furthermore, we need to introduce the following condition on the local solution $\mathbf{x}$.

\medskip
{\bf (C1)} \textit{The components of $\mathbf{x}$ are pairwise distinct, i.e., ${\mathbf{x}}^{j_1}\neq {\mathbf{x}}^{j_2}$ whenever $j_2\neq j_1$.}

\begin{definition}\label{nontrivial_local_solution}
	{\rm  A local solution $\mathbf{x} = ({\mathbf{x}}^1,..., {\mathbf{x}}^k)$ of  \eqref{DC_clustering_problem} that satisfies {\bf (C1)} is called a \textit{nontrivial local solution}.}
\end{definition}

\begin{remark}\label{nontriviality_of_global_solution} {\rm Proposition \ref{thm_basic_facts(1)} shows that every global solution of \eqref{DC_clustering_problem} is a nontrivial local solution.}
\end{remark}

The following fundamental facts have the origin in \cite[pp.~346]{Ordin_Bagirov_2015}. Here, a more precise and complete formulation is presented. In accordance with \eqref{Jix_formula}, the first assertion of next theorem means that if $\mathbf{x}$ is a nontrivial local solution, then for each data point ${\mathbf{a}}^i\in A$ there is a unique component ${\mathbf{x}}^j$ of $\mathbf{x}$ such that ${\mathbf{a}}^i\in A[{\mathbf{x}}^j]$.

\begin{theorem}\label{necessary_optimality_condition} {\rm (Necessary conditions for nontrivial local optimality)} Suppose that $\mathbf{x} = ({\mathbf{x}}^1,..., {\mathbf{x}}^k)$ is a nontrivial local solution of \eqref{DC_clustering_problem}. Then, for any $i\in I$, $|J_i(\mathbf{x})|=1$. Moreover, for every $j\in J$ such that the attraction set $A[{\mathbf{x}}^j]$ of ${\mathbf{x}}^j$ is nonempty, one has
	\begin{equation}\label{local_solution_components}
	{\mathbf{x}}^j=\frac{1}{|I(j)|}\displaystyle\sum_{i\in I(j)} {\mathbf{a}}^i,
	\end{equation} where $I(j)=\left\{i\in I\mid {\mathbf{a}}^i\in A[{\mathbf{x}}^j]\right\}$. For any $j\in J$ with $A[{\mathbf{x}}^j]=\emptyset$, one has 
	\begin{equation}\label{abnormal_components}
	{\mathbf{x}}^j\notin {\mathcal A}[\mathbf{x}],
	\end{equation} where $ {\mathcal A}[\mathbf{x}]$ is the union of the balls $\bar B({\mathbf{a}}^p,\|{\mathbf{a}}^p-{\mathbf{x}}^q\|)$ with $p\in I$, $q\in J$ satisfying $p\in I(q)$.
\end{theorem}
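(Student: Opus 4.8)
The plan is to establish the three assertions in turn, leaning on the subdifferential formulas and the DC necessary optimality condition already assembled just before the statement.

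For the first claim, that $|J_i(\mathbf{x})|=1$ for every $i\in I$, I would argue exactly along the lines sketched above the theorem. Since $\mathbf{x}$ is a local solution, the DC necessary optimality condition gives $\partial f^2(\mathbf{x})\subset\partial f^1(\mathbf{x})$; because $f^1$ is differentiable, $\partial f^1(\mathbf{x})$ is a singleton, and hence so is $\partial f^2(\mathbf{x})$. In view of \eqref{Subdifferential fk2}, a Minkowski sum of nonempty convex sets reduces to a single point only when each summand does, so every $\partial\varphi_i(\mathbf{x})$ is a singleton. By the representation \eqref{Subdifferential Phi_i function}, this forces $\widetilde{\mathbf{x}}^{j_1}-\widetilde{\mathbf{a}}^{i,j_1}=\widetilde{\mathbf{x}}^{j_2}-\widetilde{\mathbf{a}}^{i,j_2}$ for any two indices $j_1,j_2\in J_i(\mathbf{x})$. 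Reading off the $j_1$-th and $j_2$-th blocks via \eqref{x_tilde}--\eqref{a_tilde} then yields $\mathbf{x}^{j_1}=\mathbf{x}^{j_2}=\mathbf{a}^i$, contradicting condition \textbf{(C1)}. Since $J_i(\mathbf{x})$ is always nonempty, this rules out $|J_i(\mathbf{x})|>1$ and gives $|J_i(\mathbf{x})|=1$.

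For the centroid formula \eqref{local_solution_components} on each $j$ with $A[\mathbf{x}^j]\neq\emptyset$, I would invoke Theorem \ref{thm_local_solutions} directly: every local solution already satisfies \eqref{solution_components} at each index whose attraction set is nonempty, and \eqref{local_solution_components} is that very formula. Alternatively, one re-runs the localization-and-Fermat-Rule argument from part b) of the proof of Theorem \ref{thm_basic_facts}, freezing all components but the $j$-th inside a small ball $\bar B(\mathbf{x}^j,\varepsilon)$ and differentiating the strongly convex map $\varphi(\mathbf{x}^j)=\frac{1}{m}\sum_{i\in I(j)}\|\mathbf{a}^i-\mathbf{x}^j\|^2$ at its local minimizer.

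The genuinely new part is showing $\mathbf{x}^j\notin{\mathcal A}[\mathbf{x}]$ whenever $A[\mathbf{x}^j]=\emptyset$, and I would prove it by unwinding the definitions. Emptiness of $A[\mathbf{x}^j]$ means no data point has $\mathbf{x}^j$ among its nearest centroids, i.e. $\|\mathbf{a}^i-\mathbf{x}^j\|>\min_{q\in J}\|\mathbf{a}^i-\mathbf{x}^q\|$ for every $i\in I$, the inequality being strict since equality would place $\mathbf{a}^i$ in $A[\mathbf{x}^j]$. Now fix any ball $\bar B(\mathbf{a}^p,\|\mathbf{a}^p-\mathbf{x}^q\|)$ entering the union ${\mathcal A}[\mathbf{x}]$, so that $p\in I(q)$, that is $\mathbf{a}^p\in A[\mathbf{x}^q]$ and therefore $\|\mathbf{a}^p-\mathbf{x}^q\|=\min_{r\in J}\|\mathbf{a}^p-\mathbf{x}^r\|$. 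Specializing the first fact to $i=p$ and combining gives $\|\mathbf{x}^j-\mathbf{a}^p\|>\min_{r\in J}\|\mathbf{a}^p-\mathbf{x}^r\|=\|\mathbf{a}^p-\mathbf{x}^q\|$, which says precisely that $\mathbf{x}^j$ lies strictly outside this closed ball. As the ball was arbitrary among those forming ${\mathcal A}[\mathbf{x}]$, the point $\mathbf{x}^j$ lies outside their union, which is \eqref{abnormal_components}.

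The main obstacle I anticipate is not any single computation but the bookkeeping: in the first part one must correctly match the padded blocks of \eqref{x_tilde}--\eqref{a_tilde} to extract $\mathbf{x}^{j_1}=\mathbf{x}^{j_2}=\mathbf{a}^i$, and in the third part one must keep the roles of the index sets $I(q)$ and the radii $\|\mathbf{a}^p-\mathbf{x}^q\|$ straight while tracking the strictness of the inequality, since it is exactly strictness that forces exclusion from the \emph{closed} balls.
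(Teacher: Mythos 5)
Your proof is correct, and it reaches all three conclusions by legitimate means, but it departs from the paper's own proof in the last two claims, so a comparison is in order. For the first claim your argument is exactly the paper's: the DC condition \eqref{Optimality_condition_DC 1a} forces $\partial f^2(\mathbf{x})$ to be a singleton, hence each $\partial\varphi_i(\mathbf{x})$ is a singleton by \eqref{Subdifferential fk2}, and via \eqref{Subdifferential Phi_i function}--\eqref{a_tilde} any two indices in $J_i(\mathbf{x})$ would give ${\mathbf{x}}^{j_1}={\mathbf{x}}^{j_2}={\mathbf{a}}^i$, contradicting \textbf{(C1)}. For the centroid formula \eqref{local_solution_components}, however, the paper does not invoke Theorem \ref{thm_local_solutions}; it stays inside the DC framework: having $J_i(\mathbf{x})=\{j(i)\}$ for all $i$, it shows $f^2$ is continuously differentiable near $\mathbf{x}$, computes $\nabla f^2$ by \eqref{nabla fk2}, equates it with $\nabla f^1(\mathbf{x})$ through \eqref{Optimality_condition_DC 1a} to obtain the identity \eqref{basic_relation}, and then reads off the $j$-th block to get \eqref{local_solution_components}. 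Your citation of Theorem \ref{thm_local_solutions} is a valid shortcut: that theorem applies to every local solution whose index set $I(j)$ is nonempty, it was established before the present statement, and its proof (the localization-plus-Fermat-rule argument you also sketch) does not depend on anything being proved here, so there is no circularity; it is simply more economical, while the paper's longer route buys a derivation showing that the DC optimality condition alone already pins down the attractive components. For \eqref{abnormal_components}, the paper argues by contradiction and splits into two cases, excluding the equality case $\|{\mathbf{a}}^p-{\mathbf{x}}^{j_0}\|=\|{\mathbf{a}}^p-{\mathbf{x}}^q\|$ by appealing to the already-proved first claim $|J_p(\mathbf{x})|=1$, and the strict case because it would force $p\notin I(q)$. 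Your direct argument is cleaner and slightly stronger in spirit: it shows \eqref{abnormal_components} is a purely definitional consequence of $A[{\mathbf{x}}^j]=\emptyset$ (the equality case is impossible simply because it would place ${\mathbf{a}}^p$ in $A[{\mathbf{x}}^j]$), so this part needs neither the first claim nor local optimality at all.
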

\begin{proof} Suppose $\mathbf{x} = ({\mathbf{x}}^1,..., {\mathbf{x}}^k)$ is a nontrivial local solution of \eqref{DC_clustering_problem}. Given any $i\in I$, we must have $|J_i(\mathbf{x})|=1$. Indeed, if  $|J_i(\mathbf{x})|>1$ then, by the analysis given before the formulation of the theorem,  there exist indexes  $j_1$ and $j_2$ from $J_i(\mathbf{x})$ such that ${\mathbf{x}}^{j_1}={\mathbf{x}}^{j_2}={\mathbf{a}}^i$. This contradicts the nontriviality of the local solution $\mathbf{x}$. Let $J_i(\mathbf{x})=\{j(i)\}$ for $i\in I$, i.e., $j(i)\in J$ is the unique element of $J_i(\mathbf{x})$.

For each $i\in I$, observe by \eqref{Phi_i function} that
$$h_{i,j}(\mathbf{x})<h_{i,j(i)}(\mathbf{x})=\varphi_i(\mathbf{x})\ \; \forall j\in J\setminus \{j(i)\}.$$
Hence, by the continuity of the functions $h_{i,j}(\mathbf{x})$, there exists an open neighborhood $U_i$ of $\mathbf{x}$ such that 
$$h_{i,j}(\mathbf{y})<h_{i,j(i)}(\mathbf{y})\ \; \forall j\in J\setminus \{j(i)\},\; \forall  \mathbf{y}\in U_i.$$ It follows that
\begin{equation}\label{function Phi_i}
\varphi_i(\mathbf{y}) = h_{i,j(i)}(\mathbf{y})\ \; \forall  \mathbf{y}\in U_i.
\end{equation}
So, $\varphi_i(.)$ is continuously differentiable on $U_i$. Put $U=\displaystyle\bigcap_{i\in I}U_i$. From \eqref{DC function fk2} and \eqref{function Phi_i} one can deduce that
\begin{eqnarray*}
	f^2(\mathbf{y})=\frac{1}{m}\sum_{i\in I} \varphi_i(\mathbf{y})=\frac{1}{m}\sum_{i\in I} h_{i,j(i)}(\mathbf{y})\quad\forall  \mathbf{y}\in U.
\end{eqnarray*} Therefore, $f^2(\mathbf{y})$ is continuously differentiable function on $U$. Moreover, the formulas \eqref{Subdifferential Phi_i function}--\eqref{a_tilde} yield
\begin{eqnarray}\label{nabla fk2}
\nabla f^2(\mathbf{y})= \frac{2}{m}\sum_{i\in I} (\widetilde {\mathbf{y}}^{j(i)}-\widetilde {\mathbf{a}}^{i,j(i)})\quad\forall  \mathbf{y}\in U,
\end{eqnarray}
where  \begin{eqnarray*}\widetilde {\mathbf{y}}^{j(i)}=\big(\mathbf{y}^1,...,\mathbf{y}^{j(i)-1},\mathbf{0}_{\mathbb{R}^n},\mathbf{y}^{j(i)+1},...,\mathbf{y}^k\big)\end{eqnarray*}
and 
\begin{eqnarray*}\widetilde {\mathbf{a}}^{i,j(i)}= \left({\mathbf{a}}^i,\dots,{\mathbf{a}}^i,\underbrace{\mathbf{0}_{\mathbb R^n}}_{j(i)-{\rm th\ position}},{\mathbf{a}}^i,\dots,{\mathbf{a}}^i\right). \end{eqnarray*} Substituting $\mathbf{y}=\mathbf{x}$ into  \eqref{nabla fk2} and combining the result with \eqref{Optimality_condition_DC 1a}, we obtain
\begin{eqnarray}\label{basic_relation}\sum_{i\in I} (\widetilde {\mathbf{x}}^{j(i)}-\widetilde {\mathbf{a}}^{i,j(i)})=m({\mathbf{x}}^1-{\mathbf{a}}^0,...,{\mathbf{x}}^k-{\mathbf{a}}^0).\end{eqnarray}
Now, fix an index $j\in J$ with $A[{\mathbf{x}}^j]\neq\emptyset$ and transform the left-hand side of \eqref{basic_relation} as follows:
\begin{align*}
\sum_{i\in I} (\widetilde {\mathbf{x}}^{j(i)}-\widetilde {\mathbf{a}}^{i,j(i)})
& = \sum_{i\in I,\, j(i)=j} (\widetilde {\mathbf{x}}^{j(i)}-\widetilde {\mathbf{a}}^{i,j(i)}) + \sum_{i\in I,\, j(i)\neq j} (\widetilde {\mathbf{x}}^{j(i)}-\widetilde {\mathbf{a}}^{i,j(i)})\\
& = \sum_{i\in I,\, j(i)=j} (\widetilde {\mathbf{x}}^{j(i)}-\widetilde {\mathbf{a}}^{i,j(i)}) + \sum_{i\notin I(j)} (\widetilde {\mathbf{x}}^{j(i)}-\widetilde {\mathbf{a}}^{i,j(i)}).
\end{align*}
Clearly, if $j(i)=j$, then the $j$-th component of the vector $\widetilde {\mathbf{x}}^{j(i)}-\widetilde {\mathbf{a}}^{i,j(i)}$, that belongs to $\mathbb R^{n\times k}$, is $\mathbf{0}_{\mathbb R^n}$. If $j(i)\neq j$, then the $j$-th component of the vector $\widetilde {\mathbf{x}}^{j(i)}-\widetilde {\mathbf{a}}^{i,j(i)}$ is ${\mathbf{x}}^j-{\mathbf{a}}^i$. Consequently, \eqref{basic_relation} gives us
$$\displaystyle\sum_{i\notin I(j)}({\mathbf{x}}^j-{\mathbf{a}}^i)=m({\mathbf{x}}^j-{\mathbf{a}}^0).$$ Since $m{\mathbf{a}}^0={\mathbf{a}}^1+\dots +{\mathbf{a}}^m$, this yields $\displaystyle\sum_{i\in I(j)}{\mathbf{a}}^i = |I(j)|{\mathbf{x}}^j.$ Thus, formula \eqref{local_solution_components} is valid for any $j\in J$ satisfying $A[{\mathbf{x}}^j]\neq\emptyset$.

For any $j\in J$ with $A[{\mathbf{x}}^j]=\emptyset$, one has \eqref{abnormal_components}. Indeed, suppose to the contrary that there exits $j_0\in J$ with $A[{\mathbf{x}}^{j_0}]=\emptyset$ such that for some $p\in I$, $q\in J$, one has $p\in I(q)$ and 
${\mathbf{x}}^{j_0}\in\bar B({\mathbf{a}}^p,\|{\mathbf{a}}^p-{\mathbf{x}}^q\|)$. If $\|{\mathbf{a}}^p-{\mathbf{x}}^{j_0}\|=\|{\mathbf{a}}^p-{\mathbf{x}}^q\|$, then $J_p(\mathbf{x})\supset \{q,j_0\}$. This is impossible due to the first claim of the theorem. Now, if $\|{\mathbf{a}}^p-{\mathbf{x}}^{j_0}\|<\|{\mathbf{a}}^p-{\mathbf{x}}^q\|$, then $p\notin I(q)$. We have thus arrived at a contradiction.

The proof is complete. $\hfill\Box$
\end{proof}

Roughly speaking, the necessary optimality condition given in the above theorem is a sufficient one. Therefore, in combination with Theorem \ref{necessary_optimality_condition}, next statement gives \textit{a complete description} of the nontrivial local solutions of \eqref{DC_clustering_problem}.

\begin{theorem}\label{sufficient_optimality_condition}  {\rm (Sufficient conditions for nontrivial local optimality)}  Suppose that a vector $\mathbf{x} = ({\mathbf{x}}^1,..., {\mathbf{x}}^k) \in\mathbb R^{n\times k}$ satisfies condition  {\bf (C1)} and $|J_i(\mathbf{x})|=1$ for every $i\in I$. If \eqref{local_solution_components} is valid for any $j\in J$ with $A[{\mathbf{x}}^j]\neq\emptyset$ and \eqref{abnormal_components} is fulfilled for any $j\in J$ with $A[{\mathbf{x}}^j]=\emptyset$,  then $\mathbf{x}$ is a nontrivial local solution of~\eqref{DC_clustering_problem}.
\end{theorem}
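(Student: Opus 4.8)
The plan is to show that $\mathbf{x}$ is a local minimizer of the separable, frozen-assignment surrogate of $f$, using the hypotheses to guarantee that the nearest-centroid assignment of each data point cannot change under a small perturbation. First I would extract the strict separation encoded in the hypothesis $|J_i(\mathbf{x})|=1$. By Proposition~\ref{meaning_of_Ji(x)}, for each $i\in I$ there is a unique index $j(i)\in J$ with ${\mathbf{a}}^i\in A[{\mathbf{x}}^{j(i)}]$, and uniqueness forces the nearest-centroid inequality to be strict:
$$\|{\mathbf{a}}^i-{\mathbf{x}}^{j(i)}\|<\|{\mathbf{a}}^i-{\mathbf{x}}^q\|\qquad(\forall q\in J\setminus\{j(i)\}).$$
For a competing centroid $q$ with empty attraction set this strictness is exactly what \eqref{abnormal_components} records (equivalently, it is forced by $A[{\mathbf{x}}^q]=\emptyset$). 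Since there are only finitely many such strict inequalities and the maps $\mathbf{y}\mapsto\|{\mathbf{a}}^i-\mathbf{y}^q\|$ are continuous, they all persist on a common open neighborhood $U$ of $\mathbf{x}$; hence $\min_{q\in J}\|{\mathbf{a}}^i-\mathbf{y}^q\|^2=\|{\mathbf{a}}^i-\mathbf{y}^{j(i)}\|^2$ for every $\mathbf{y}\in U$ and every $i\in I$.

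Next I would introduce the frozen-assignment function $g(\mathbf{y}):=\frac{1}{m}\sum_{i\in I}\|{\mathbf{a}}^i-\mathbf{y}^{j(i)}\|^2$ and record two facts: $f(\mathbf{y})\le g(\mathbf{y})$ for \emph{all} $\mathbf{y}$, because the minimum over $j$ is dominated by the single term $j=j(i)$; and $f(\mathbf{y})=g(\mathbf{y})$ for $\mathbf{y}\in U$ by the freezing step, so in particular $f(\mathbf{x})=g(\mathbf{x})$. Grouping the summands of $g$ according to the value of $j(i)$ and using the identity $\{i\in I\,:\,j(i)=j\}=I(j)$ (valid since $|J_i(\mathbf{x})|=1$), I obtain the separable form
$$g(\mathbf{y})=\frac{1}{m}\sum_{j\,:\,A[{\mathbf{x}}^j]\neq\emptyset}\Big(\sum_{i\in I(j)}\|{\mathbf{a}}^i-\mathbf{y}^j\|^2\Big),$$
in which the components $\mathbf{y}^j$ corresponding to empty attraction sets do not appear.

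Then I would minimize $g$ componentwise. For each $j$ with $A[{\mathbf{x}}^j]\neq\emptyset$ the inner sum is a strongly convex quadratic in $\mathbf{y}^j$ whose unique global minimizer is the barycenter $\frac{1}{|I(j)|}\sum_{i\in I(j)}{\mathbf{a}}^i$, which by \eqref{local_solution_components} is precisely ${\mathbf{x}}^j$. Consequently $g(\mathbf{y})\ge g(\mathbf{x})$ for every $\mathbf{y}\in\mathbb R^{n\times k}$. Combining this with the two facts above, for $\mathbf{y}\in U$ we get $f(\mathbf{y})=g(\mathbf{y})\ge g(\mathbf{x})=f(\mathbf{x})$, so $\mathbf{x}$ is a local solution of \eqref{DC_clustering_problem}; it is nontrivial because it satisfies \textbf{(C1)}.

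The main obstacle is the freezing step: I must rule out that an arbitrarily small perturbation lets some data point jump to a different centroid, and in particular that an empty-attraction centroid $\mathbf{y}^{j_0}$ swings close enough to capture a point. This is exactly where the two separation hypotheses are used in tandem: $|J_i(\mathbf{x})|=1$ gives strict separation of ${\mathbf{a}}^i$ from every nonempty competing centroid, while \eqref{abnormal_components} (equivalently $A[{\mathbf{x}}^{j_0}]=\emptyset$) gives strict separation from every empty-attraction centroid, so all relevant inequalities are strict and survive on a neighborhood. Once freezing is secured, the remainder is the elementary barycenter computation and the trivial domination $f\le g$.
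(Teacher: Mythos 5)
Your proposal is correct and follows essentially the same route as the paper's proof: both persist the strict nearest-centroid inequalities (coming from $|J_i(\mathbf{x})|=1$ together with the emptiness/nonemptiness of the attraction sets) on a neighborhood of $\mathbf{x}$, freeze the assignments $i\mapsto j(i)$ there, regroup the sum over the index sets $I(j)$, and invoke the barycenter property \eqref{local_solution_components} to get $f(\widetilde{\mathbf{x}})\geq f(\mathbf{x})$; your surrogate $g$ with the global bound $f\le g$ is only a cosmetic repackaging of the paper's direct chain of (in)equalities. Your side remark that, once $|J_i(\mathbf{x})|=1$ holds for all $i\in I$, condition \eqref{abnormal_components} for a centroid with empty attraction set is automatically equivalent to that emptiness is also accurate, and it explains why the paper's separate appeal to \eqref{abnormal_components_perturbed} is not logically needed beyond the persistence of the strict inequalities.
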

\begin{proof} Let $\mathbf{x} = ({\mathbf{x}}^1,..., {\mathbf{x}}^k) \in\mathbb R^{n\times k}$ be such that  {\bf (C1)} holds, $J_i(\mathbf{x})=\{j(i)\}$ for every $i\in I$, \eqref{local_solution_components} is valid for any $j\in J$ with $A[{\mathbf{x}}^j]\neq\emptyset$, and \eqref{abnormal_components} is satisfied for any $j\in J$ with $A[{\mathbf{x}}^j]=\emptyset$. Then, for all $i\in I$ and $j'\in J\setminus \{j(i)\}$, one has
$$\|{\mathbf{a}}^i-{\mathbf{x}}^{j(i)}\|<\|{\mathbf{a}}^i-{\mathbf{x}}^{j'}\|.$$ So, there exist $\varepsilon>0$, $q\in J$, such that 
\begin{eqnarray}\label{perturbation1}
\|{\mathbf{a}}^i-\widetilde{\mathbf{x}}^{j(i)}\|<\|{\mathbf{a}}^i-\widetilde{\mathbf{x}}^{j'}\|\quad \forall i\in I,\; \forall j'\in J\setminus \{j(i)\},
\end{eqnarray} whenever vector $\widetilde{\mathbf{x}} = (\widetilde{\mathbf{x}}^1,..., \widetilde{\mathbf{x}}^k) \in\mathbb R^{n\times k}$ satisfies the condition $\|\widetilde{\mathbf{x}}^q-{\mathbf{x}}^q\|<\varepsilon$ for all $q\in J$.
By \eqref{abnormal_components} and by the compactness of ${\mathcal A}[\mathbf{x}]$, reducing the positive number $\varepsilon$ (if necessary) we have \begin{equation}\label{abnormal_components_perturbed}
\widetilde{\mathbf{x}}^j\notin {\mathcal A}[\widetilde{\mathbf{x}}]
\end{equation} whenever vector $\widetilde{\mathbf{x}} = (\widetilde{\mathbf{x}}^1,..., \widetilde{\mathbf{x}}^k) \in\mathbb R^{n\times k}$ satisfies the condition $\|\widetilde{\mathbf{x}}^q-{\mathbf{x}}^q\|<\varepsilon$ for all $q\in J$, where $ {\mathcal A}[\widetilde{\mathbf{x}}]$ is the union of the balls $\bar B({\mathbf{a}}^p,\|{\mathbf{a}}^p-\widetilde{\mathbf{x}}^q\|)$ with $p\in I$, $q\in J$ satisfying $p\in I(q)=\left\{i\in I\mid {\mathbf{a}}^i\in A[{\mathbf{x}}^q]\right\}$.

Fix an arbitrary vector $\widetilde{\mathbf{x}} = (\widetilde{\mathbf{x}}^1,..., \widetilde{\mathbf{x}}^k) \in\mathbb R^{n\times k}$ with the property that $\|\widetilde{\mathbf{x}}^q-{\mathbf{x}}^q\|<\varepsilon$ for all $q\in J$. Then, by \eqref{perturbation1} and \eqref{abnormal_components_perturbed}, $J_i(\widetilde{\mathbf{x}})=\{j(i)\}$. So, 
$$\min_{j\in J} \|{\mathbf{a}}^i-\widetilde{\mathbf{x}}^j\|^2=\|{\mathbf{a}}^i-\widetilde{\mathbf{x}}^{j(i)}\|^2.$$ Therefore, one has
\begin{equation*}\begin{array}{rl}
f(\widetilde{\mathbf{x}}) & =\displaystyle\frac{1}{m}\sum_{i\in I}\left(\min_{j\in J} \|{\mathbf{a}}^i-\widetilde{\mathbf{x}}^j\|^2\right)\\
& = \displaystyle\frac{1}{m}\sum_{i\in I}\|{\mathbf{a}}^i-\widetilde{\mathbf{x}}^{j(i)}\|^2\\
& = \displaystyle\frac{1}{m}\sum_{j\in J}\left(\sum_{i\in I(j)}\|{\mathbf{a}}^i-\widetilde{\mathbf{x}}^{j(i)}\|^2\right)\\
& = \displaystyle\frac{1}{m}\sum_{j\in J}\left(\sum_{i\in I(j)}\|{\mathbf{a}}^i-\widetilde{\mathbf{x}}^{j}\|^2\right)\\
& \geq \displaystyle\frac{1}{m}\sum_{j\in J}\left(\sum_{i\in I(j)}\|{\mathbf{a}}^i-{\mathbf{x}}^{j}\|^2\right)=f(\mathbf{x}),
\end{array}
\end{equation*}
where the inequality is valid because \eqref{local_solution_components} obviously yields
$$\sum_{i\in I(j)}\|{\mathbf{a}}^i-{\mathbf{x}}^{j}\|^2\leq \sum_{i\in I(j)}\|{\mathbf{a}}^i-\widetilde{\mathbf{x}}^{j}\|^2$$ for every $j\in J$ such that the attraction set $A[{\mathbf{x}}^j]$ of ${\mathbf{x}}^j$ is nonempty. (Note that ${\mathbf{x}}^j$ is the barycenter of  $A[{\mathbf{x}}^j]$.)

The local optimality of $\mathbf{x}= ({\mathbf{x}}^1,..., {\mathbf{x}}^k) $ has been proved. Hence, $\mathbf{x}$ is a nontrivial local solution of \eqref{DC_clustering_problem}.$\hfill\Box$
\end{proof}

\begin{example}\label{basic_example} {\rm (A local solution need not be a global solution)} {\rm Consider the clustering problem described in Example \ref{Kmeans_example}. Here, $I=\{1,2,3\}$ and $J=\{1,2\}$. By Theorem \ref{thm_basic_facts}, problem \eqref{DC_clustering_problem} has a global solution. Moreover, if $\mathbf{x}=({\mathbf{x}}^1,{\mathbf{x}}^2)\in\mathbb R^{2\times 2}$ is a global solution then, for every $j\in J$, the attraction set $A[{\mathbf{x}}^j]$ is nonempty. Thanks to Remark \ref{nontriviality_of_global_solution}, we know that $\mathbf{x}$ is a nontrivial local solution. So, by Theorem \ref{necessary_optimality_condition}, the attraction sets $A[{\mathbf{x}}^1]$ and $A[{\mathbf{x}}^2]$ are disjoint. Moreover, the barycenter of each one of these sets can be computed by formula \eqref{local_solution_components}. Clearly, $A=A[{\mathbf{x}}^1]\cup A[{\mathbf{x}}^2]$. Since $A[{\mathbf{x}}^j]\subset A=\{{\mathbf{a}}^1,{\mathbf{a}}^2,{\mathbf{a}}^3\}$, allowing permutations of the components of each vector $\mathbf{x}=({\mathbf{x}}^1,{\mathbf{x}}^2)\in\mathbb R^{2\times 2}$ (see Remark \ref{permutations_are_allowed}), we can assert that the global solution set of our problem is contained in the set
		\begin{equation}\label{core_local_solutions}
		\left\{\bar{\mathbf{x}}:=\big((\frac{1}{2},\frac{1}{2}),\,(0,0)\big),\ \hat {\mathbf{x}}:=\big((0,\frac{1}{2}),\,(1,0)\big),\ \widetilde {\mathbf{x}}:=\big((\frac{1}{2},0),(0,1)\big)\right\}.
		\end{equation}
		Since $f(\bar{\mathbf{x}}) = \frac{1}{3}$ and $f(\hat {\mathbf{x}}) = f(\widetilde {\mathbf{x}}) = \frac{1}{6}$, we infer that $\hat {\mathbf{x}}$ and $\widetilde {\mathbf{x}}$ are global solutions of our problem. Using Theorem \ref{sufficient_optimality_condition}, we can assert that $\bar{\mathbf{x}}$ is a local solution. Thus,  $\bar{\mathbf{x}}$ is a local solution which does not belong to the global solution set, i.e., $\bar{\mathbf{x}}$ is \textit{a local-nonglobal solution} of our problem.}
\end{example}

\begin{example}\label{local_solutions_example} {\rm (Complete description of the set of nontrivial local solutions)} {\rm Again, consider the MSSC problem given in Example \ref{Kmeans_example}. Allowing permutations of the components of each vector in $\mathbb R^{2\times 2}$, by Theorems \ref{necessary_optimality_condition} and \ref{sufficient_optimality_condition} we find that the set of nontrivial local solutions consists of the three vectors described in \eqref{core_local_solutions} and all the vectors of the form $\mathbf{x}=({\mathbf{x}}^1,{\mathbf{x}}^2)\in\mathbb R^{2\times 2}$, where ${\mathbf{x}}^1 = (\frac{1}{3},\frac{1}{3})$ and $${\mathbf{x}}^2\notin \bar B({\mathbf{a}}^1,\|{\mathbf{a}}^1-{\mathbf{x}}^1\|)\cup \bar B({\mathbf{a}}^2,\|{\mathbf{a}}^2-{\mathbf{x}}^1\|)\cup \bar B({\mathbf{a}}^3,\|{\mathbf{a}}^3-{\mathbf{x}}^1\|).$$ This set of nontrivial local solutions is \textit{unbounded} and \textit{non-closed}.}
\end{example}

\begin{example}\label{convergence_analysis_example} {\rm (Convergence analysis of the $k$-means algorithm)} {\rm 
		Consider once again the problem described in Example~\ref{Kmeans_example}. By the results given in Example~\ref{local_solutions_example}, the centroid systems in items (a), (b), (c) and (d) of Example~\ref{Kmeans_example} are local solutions. In addition, by Example \ref{basic_example}, the centroid systems in the just mentioned items (a) and (c) are global solutions. Concerning the centroid system in item (e) of Example~\ref{Kmeans_example}, remark that $\mathbf{x}:=\big((\frac{1}{3},\frac{1}{3}), (1+\frac{\sqrt{5}}{3},0)\big)$ is not a local solution by Theorem \ref{necessary_optimality_condition}, because ${\mathbf{a}}^2\in A[{\mathbf{x}}^1]\cap A[{\mathbf{x}}^2]$, i.e., $J_2(\mathbf{x}) = \{1,2\}$ (see \figurename{ 1}). In general, with ${\mathbf{x}}^1=(\frac{1}{3},\frac{1}{3})$ and ${\mathbf{x}}^2\in\mathbb{R}^{2\times 2}$ belonging to the boundary of the set $$\bar B({\mathbf{a}}^1,\|{\mathbf{a}}^1-{\mathbf{x}}^1\|)\cup \bar B({\mathbf{a}}^2,\|{\mathbf{a}}^2-{\mathbf{x}}^1\|)\cup \bar B({\mathbf{a}}^3,\|{\mathbf{a}}^3-{\mathbf{x}}^1\|),$$
		$\mathbf{x}:=({\mathbf{x}}^1,{\mathbf{x}}^2)$ is not a local solution of the MSSC problem under consideration. The above analysis shows that the $k$-means algorithm is very sensitive to the choice of starting centroids. The algorithm may give a global solution, a local-nonglobal solution, as well as a centroid system which is not a local solution. In other words, the quality of the obtained result greatly depends on the initial centroid system.} 
\end{example}

	\begin{figure}[htp]
		\begin{center}
			\includegraphics[height=8.5cm,width=8cm]{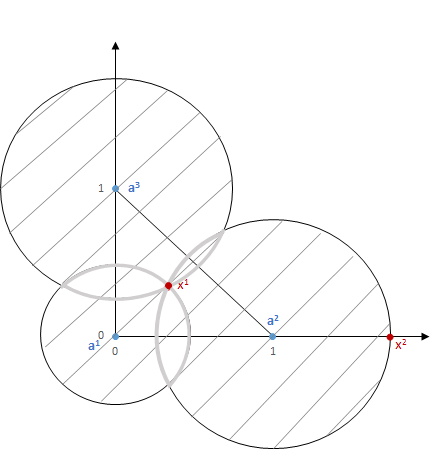}
		\end{center}
		\caption{The centroids in item (e) of Example \ref{Kmeans_example}}\label{fig 1}
	\end{figure}

\section{Stability Properties}

This section is devoted to establishing the local Lipschitz property of the optimal value function, the local upper Lipschitz property of the global solution map, and the local Lipschitz-like property of the local solution map of \eqref{DC_clustering_problem}.

\medskip
Now, let the data set $A = \{{\mathbf{a}}^1,...,{\mathbf{a}}^m\}$ of the problem \eqref{DC_clustering_problem} be subject to change. Put $\mathbf{a}=({\mathbf{a}}^1,...,{\mathbf{a}}^m)$ and observe that $\mathbf{a}\in \mathbb R^{n\times m}$. Denoting by $v(\mathbf{a})$ the \textit{optimal value} of \eqref{DC_clustering_problem}, one has \begin{equation}\label{formula_v(a)}
v(\mathbf{a})=\min\{f(\mathbf{x})\,\mid\, \mathbf{x}=({\mathbf{x}}^1,\dots,{\mathbf{x}}^k)\in\mathbb R^{n\times k}\}.
\end{equation} The\textit{ global solution set} of \eqref{DC_clustering_problem}, denoted by $F(\mathbf{a})$, is given by $$F(\mathbf{a})=\left\{\mathbf{x}=({\mathbf{x}}^1,\dots,{\mathbf{x}}^k)\in\mathbb R^{n\times k} \,\mid\, f(\mathbf{x})=v(\mathbf{a})\right\}.$$ Let us abbreviate the \textit{local solution set} of \eqref{DC_clustering_problem} to $F_1(\mathbf{a})$. Note that $F(\mathbf{a})\subset F_1(\mathbf{a})$, and the inclusion may be strict. 

\begin{definition}\label{partition_of_I}
	{\rm A family $\{I(j)\mid j\in J\}$ of pairwise distinct, nonempty subsets of $I$ is said to be a \textit{partition} of $I$ if $\displaystyle\bigcup_{j\in J} I(j)=I$.}
\end{definition}

\medskip
\textit{From now on, let $\bar {\mathbf{a}}=(\bar {\mathbf{a}}^1,...,\bar {\mathbf{a}}^m)\in \mathbb R^{n\times m}$ be a fixed vector with the property that $\bar {\mathbf{a}}^1,...,\bar {\mathbf{a}}^m$ are pairwise distinct.}

\begin{theorem}\label{Lipschitz_property_of_v}  {\rm (Local Lipschitz property of the optimal value function)}  The optimal value function $v:\mathbb R^{n\times m}\to\mathbb R$ is locally Lipschitz at $\bar {\mathbf{a}}$, i.e., there exist $L_0>0$ and $\delta_0>0$ such that $$|v(\mathbf{a})-v(\mathbf{a}')|\leq L_0 \|\mathbf{a}-\mathbf{a}'\|$$ for all $\mathbf{a}$ and $\mathbf{a'}$ satisfying $\|\mathbf{a}-\bar{\mathbf{a}}\|<\delta_0$ and $\|\mathbf{a}'-\bar{\mathbf{a}}\|<\delta_0$.
\end{theorem}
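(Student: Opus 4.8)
The plan is to reduce the unconstrained minimization defining $v(\mathbf{a})$ to a minimization over a \emph{fixed} compact set valid for all $\mathbf{a}$ near $\bar{\mathbf{a}}$, and then to exploit that the objective, viewed as a function of the data, is Lipschitz uniformly in the centroid variable ranging over that compact set. To display the dependence on the data, I would write $f(\mathbf{x},\mathbf{a})=\frac{1}{m}\sum_{i\in I}\min_{j\in J}\|\mathbf{a}^i-\mathbf{x}^j\|^2$. For a data vector $\mathbf{a}$ let $\mathbf{a}^0(\mathbf{a})=\frac{1}{m}\sum_{i\in I}\mathbf{a}^i$ be its barycenter and $\rho(\mathbf{a})=\max_{i\in I}\|\mathbf{a}^i-\mathbf{a}^0(\mathbf{a})\|$, and let $\bar{\mathbf{a}}^0=\mathbf{a}^0(\bar{\mathbf{a}})$.

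First I would localize the minimizers uniformly. The argument in part (a) of the proof of Theorem \ref{thm_basic_facts} shows, for each fixed $\mathbf{a}$, that any centroid component lying outside $\bar B(\mathbf{a}^0(\mathbf{a}),2\rho(\mathbf{a}))$ can be moved to $\mathbf{a}^0(\mathbf{a})$ without increasing $f$, so that
\[
v(\mathbf{a})=\min\bigl\{f(\mathbf{x},\mathbf{a})\ \mid\ \mathbf{x}^j\in\bar B(\mathbf{a}^0(\mathbf{a}),2\rho(\mathbf{a})),\ \forall j\in J\bigr\}.
\]
For $\|\mathbf{a}-\bar{\mathbf{a}}\|<\delta_0$ one has $\|\mathbf{a}^0(\mathbf{a})-\bar{\mathbf{a}}^0\|\le\delta_0$ and $\rho(\mathbf{a})\le\rho(\bar{\mathbf{a}})+2\delta_0$, hence every such localized feasible region is contained in the single compact set $K:=\{\mathbf{x}\mid\mathbf{x}^j\in\bar B(\bar{\mathbf{a}}^0,R),\ \forall j\in J\}$ with $R:=2\rho(\bar{\mathbf{a}})+5\delta_0$. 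Since $K$ sits between the localized ball and all of $\mathbb R^{n\times k}$, a squeezing of minima gives $v(\mathbf{a})=\min_{\mathbf{x}\in K}f(\mathbf{x},\mathbf{a})$ for every $\mathbf{a}$ in the $\delta_0$-ball about $\bar{\mathbf{a}}$, with the minimum attained in $K$.

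Next I would prove a uniform Lipschitz estimate in the data variable. Using $|\min_j s_j-\min_j t_j|\le\max_j|s_j-t_j|$, the factorization $\|\mathbf{a}^i-\mathbf{x}^j\|^2-\|(\mathbf{a}')^i-\mathbf{x}^j\|^2=(\|\mathbf{a}^i-\mathbf{x}^j\|-\|(\mathbf{a}')^i-\mathbf{x}^j\|)(\|\mathbf{a}^i-\mathbf{x}^j\|+\|(\mathbf{a}')^i-\mathbf{x}^j\|)$, and the reverse triangle inequality, one obtains for all $\mathbf{x}\in K$ and all $\mathbf{a},\mathbf{a}'$ in the $\delta_0$-ball about $\bar{\mathbf{a}}$ that
\[
|f(\mathbf{x},\mathbf{a})-f(\mathbf{x},\mathbf{a}')|\le\frac{C}{m}\sum_{i\in I}\|\mathbf{a}^i-(\mathbf{a}')^i\|\le L_0\|\mathbf{a}-\mathbf{a}'\|,
\]
where $C$ bounds the factor $\|\mathbf{a}^i-\mathbf{x}^j\|+\|(\mathbf{a}')^i-\mathbf{x}^j\|$ uniformly (such a bound exists precisely because $K$ is a fixed bounded set and $\mathbf{a},\mathbf{a}'$ stay bounded), and $L_0$ absorbs $C$ together with the Cauchy–Schwarz constant $m^{-1/2}$. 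The crucial point is that $L_0$ is independent of $\mathbf{x}\in K$.

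Finally I would conclude by the standard infimum argument: choosing $\mathbf{x}_{\mathbf{a}}\in K$ with $f(\mathbf{x}_{\mathbf{a}},\mathbf{a})=v(\mathbf{a})$, one has $v(\mathbf{a}')\le f(\mathbf{x}_{\mathbf{a}},\mathbf{a}')\le v(\mathbf{a})+L_0\|\mathbf{a}-\mathbf{a}'\|$, and interchanging $\mathbf{a}$ and $\mathbf{a}'$ yields the reverse bound, whence $|v(\mathbf{a})-v(\mathbf{a}')|\le L_0\|\mathbf{a}-\mathbf{a}'\|$. I expect the main obstacle to be the first step, namely verifying that one \emph{single} compact set $K$ simultaneously localizes the minimizers for every nearby $\mathbf{a}$; once this uniform localization is secured, the Lipschitz estimate and the infimum argument are routine. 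I would also note that pairwise distinctness of $\bar{\mathbf{a}}^1,\dots,\bar{\mathbf{a}}^m$ is not strictly needed for this conclusion, but it is retained to keep the hypotheses aligned with the subsequent stability theorems.
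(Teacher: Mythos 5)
Your proof is correct, but it follows a genuinely different route from the paper's. The paper exploits the structural results of Theorem \ref{thm_basic_facts}: since $\bar{\mathbf{a}}^1,\dots,\bar{\mathbf{a}}^m$ are pairwise distinct, every global solution has components given by the barycenter formula \eqref{solution_components}, so to each partition $\omega$ of $I$ into $k$ nonempty clusters one can attach the candidate point $\mathbf{x}_\omega(\mathbf{a})$ of cluster barycenters and the smooth function $g_\omega(\mathbf{a}):=f(\mathbf{x}_\omega(\mathbf{a}),\mathbf{a})$; a continuity argument shows that for $\mathbf{a}$ near $\bar{\mathbf{a}}$ the global solutions stay among the finitely many candidates $\mathbf{x}_{\omega_1}(\mathbf{a}),\dots,\mathbf{x}_{\omega_r}(\mathbf{a})$ attached to $F(\bar{\mathbf{a}})$, whence $v(\mathbf{a})=\min_{\ell}g_{\omega_\ell}(\mathbf{a})$ locally, and Clarke's calculus for finite minima of locally Lipschitz functions concludes. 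You instead localize all minimizers into one fixed compact set $K$, prove a Lipschitz estimate for $f(\mathbf{x},\cdot)$ uniform in $\mathbf{x}\in K$, and finish with the standard two-sided infimum swap; all three steps check out (the uniform localization, which you rightly flag as the delicate point, follows exactly as in part (a) of the proof of Theorem \ref{thm_basic_facts} applied to each nearby $\mathbf{a}$). The trade-off is instructive: your argument is more elementary and, as you correctly observe, proves more --- local Lipschitz continuity of $v$ at \emph{every} $\bar{\mathbf{a}}\in\mathbb R^{n\times m}$, with no distinctness hypothesis and no appeal to nonsmooth calculus --- whereas the paper's representation of $v$ as a finite minimum of smooth functions, together with the inclusion $F(\mathbf{a})\subset\{\mathbf{x}_{\omega_1}(\mathbf{a}),\dots,\mathbf{x}_{\omega_r}(\mathbf{a})\}$, is not wasted effort: it is reused verbatim in the proof of Theorem \ref{upper_Lipschitz_property_of_v} to get the local upper Lipschitz property of the global solution map, which your compactness argument does not yield.
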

\begin{proof} Denote by $\Omega$ the set of all the partitions of $I$. Every element $\omega$ of $\Omega$ is a family $\{I_\omega(j)\mid j\in J\}$ of pairwise distinct, nonempty subsets of $I$ with $\displaystyle\bigcup_{j\in J} I_\omega(j)=I$. We associate to each pair $(\omega,\mathbf{a})$, where $\mathbf{a}=({\mathbf{a}}^1,...,{\mathbf{a}}^m)\in \mathbb R^{n\times m}$ and $\omega\in\Omega$, a vector $\mathbf{x}_\omega(\mathbf{a})=({\mathbf{x}}^1_\omega(\mathbf{a}),\dots,{\mathbf{x}}^k_\omega(\mathbf{a}))\in\mathbb R^{n\times k}$ with 
\begin{equation}\label{x_omega} {\mathbf{x}}^j_\omega(\mathbf{a})=\frac{1}{|I_\omega(j)|}\displaystyle\sum_{i\in I_\omega(j)} {\mathbf{a}}^i\end{equation} for every $j\in J$.
By Theorem \ref{thm_basic_facts}, problem \eqref{DC_clustering_problem} has solutions and the number of the global solutions is finite, i.e., $F(\bar {\mathbf{a}})$ is nonempty and finite. Moreover, for each $\bar{\mathbf{x}}=(\bar {\mathbf{x}}^1,...,\bar {\mathbf{x}}^k)\in F(\bar {\mathbf{a}})$, one can find some $\omega\in\Omega$ satisfying $\bar {\mathbf{x}}^j={\mathbf{x}}^j_\omega(\bar {\mathbf{a}})$ for all $j\in J$. Let $\Omega_1=\left\{\omega_1,\dots,\omega_r\right\}$ be the set of the elements of $\Omega$ corresponding the global solutions. Then, 
\begin{equation}\label{strict_ineq}
f(\mathbf{x}_{\omega_1}(\bar {\mathbf{a}}),\bar {\mathbf{a}})< f(\mathbf{x}_\omega(\bar {\mathbf{a}}),\bar {\mathbf{a}})\quad\ (\forall \omega\in\Omega\setminus\Omega_1),
\end{equation}
where
\begin{equation}\label{objective_function_at_a}
f(\mathbf{x},\mathbf{a})=\frac{1}{m}\sum_{i\in I}\left(\min_{j\in J} \|{\mathbf{a}}^i-{\mathbf{x}}^j\|^2\right).
\end{equation}
\hskip 0.5cm For each pair $(i,j)\in I\times J$, the rule $(\mathbf{x},\mathbf{a})\mapsto \|{\mathbf{a}}^i-{\mathbf{x}}^j\|^2$  defines a polynomial function on $\mathbb R^{n\times k}\times\mathbb R^{n\times m}$. In particular, this function is locally Lipschitz on its domain. So, by \cite[Prop.~2.3.6 and 2.3.12]{Clarke_1990} we can assert that the function $f(\mathbf{x},\mathbf{a})$ in \eqref{objective_function_at_a} is locally Lipschitz on $\mathbb R^{n\times k}\times\mathbb R^{n\times m}$.

Now, observe that for any $\omega\in\Omega$ and $j\in J$, the vector function ${\mathbf{x}}^j_\omega(.)$ in \eqref{x_omega}, which maps $\mathbb R^{n\times m}$ to $\mathbb R^n$, is continuously differentiable. In particular, it is locally Lipschitz on $\mathbb R^{n\times m}$.

For every $\omega\in\Omega$, from the above observations we can deduce that the function $g_\omega(\mathbf{a}):=f(\mathbf{x}_\omega(\mathbf{a}),\mathbf{a})$ is locally Lipschitz on $\mathbb R^{n\times m}$. Rewriting \eqref{strict_ineq} as
$$g_{\omega_1}(\bar {\mathbf{a}})< g_\omega(\bar {\mathbf{a}})\quad\ (\forall \omega\in\Omega\setminus\Omega_1)$$ and using the continuity of the functions $g_\omega(.)$, we can find a number $\delta_0>0$ such that  
\begin{equation}\label{strict_inequality_a}g_{\omega_1}(\mathbf{a})< g_\omega(\mathbf{a})\quad\ (\forall \omega\in\Omega\setminus\Omega_1)\end{equation}  
for all $\mathbf{a}$ satisfying $\|\mathbf{a}-\bar{\mathbf{a}}\|<\delta_0$. Since $\bar {\mathbf{a}}^1,...,\bar {\mathbf{a}}^m$ are pairwise distinct, without loss of generality, we may assume that ${\mathbf{a}}^1,...,{\mathbf{a}}^m$ are pairwise distinct for any $\mathbf{a}=({\mathbf{a}}^1,...,{\mathbf{a}}^m)$ with $\|\mathbf{a}-\bar{\mathbf{a}}\|<\delta_0$. 

Now, consider a vector $\mathbf{a}=({\mathbf{a}}^1,...,{\mathbf{a}}^m)$ satisfying $\|\mathbf{a}-\bar{\mathbf{a}}\|<\delta_0$. By \eqref{strict_inequality_a}, $f(\mathbf{x}_{\omega_1}(\mathbf{a}),\mathbf{a})<f(\mathbf{x}_\omega(\mathbf{a}),\mathbf{a})$ for all $\omega\in\Omega\setminus\Omega_1$. Since $f(.,\mathbf{a})$ is the objective function of \eqref{DC_clustering_problem}, this implies that the set $\{x_\omega(\mathbf{a})\mid \omega\in\Omega\setminus\Omega_1\}$ does not contain any global solution of the problem. Thanks to Theorem \ref{thm_basic_facts}, we know that the global solution set $F(\mathbf{a})$ of \eqref{DC_clustering_problem} is contained in the set $\{x_\omega(\mathbf{a})\mid \omega\in\Omega_1\}$. Hence,
\begin{equation}\label{basic_inclusion} F(\mathbf{a})\subset \{x_\omega(\mathbf{a})\mid \omega\in\Omega_1\}=\{x_{\omega_1}(\mathbf{a}),\dots,x_{\omega_r}(\mathbf{a})\}.\end{equation} 
Since $F(\mathbf{a})\neq\emptyset$, by \eqref{basic_inclusion} one has \begin{equation*}v(\mathbf{a})=\min\,\{f(\mathbf{x},\mathbf{a})\mid \mathbf{x}\in F(\mathbf{a})\}=\min\,\{f(\mathbf{x}_{\omega_\ell}(\mathbf{a}),\mathbf{a})\mid \ell=1,\dots,r\}.\end{equation*} Thus, we have proved that
\begin{equation}\label{value_function_a}
v(\mathbf{a})=\min\,\{g_{\omega_\ell}(\mathbf{a})\mid \ell=1,\dots,r\}
\end{equation} for all $\mathbf{a}$ satisfying $\|\mathbf{a}-\bar{\mathbf{a}}\|<\delta_0$. As it has been noted, the functions $g_\omega$, $\omega\in\Omega$, are locally Lipschitz on $\mathbb R^{n\times m}$. Hence,  applying \cite[Prop.~2.3.6 and 2.3.12]{Clarke_1990} to the minimum function in \eqref{value_function_a}, we can assert that $v$ is locally Lipschitz at $\bar{\mathbf{a}}$.

The proof is complete.  $\hfill\Box$
\end{proof}

\begin{theorem}\label{upper_Lipschitz_property_of_v}  {\rm (Local upper Lipschitz property of the global solution map)}  The global solution map $F:\mathbb R^{n\times m}\rightrightarrows\mathbb R^{n\times k}$ is locally upper Lipschitz at $\bar {\mathbf{a}}$, i.e., there exist $L>0$ and $\delta>0$ such that \begin{equation}\label{upper_Lipschitz} F({\mathbf{a}})\subset F(\bar {\mathbf{a}})+L \|\mathbf{a}-\bar{\mathbf{a}}\|\bar B_{\mathbb R^{n\times k}}\end{equation} for all $\mathbf{a}$ satisfying $\|\mathbf{a}-\bar{\mathbf{a}}\|<\delta$.  Here $$\bar B_{\mathbb R^{n\times k}}:=\left\{\mathbf{x}=({\mathbf{x}}^1,\dots,{\mathbf{x}}^k)\in\mathbb R^{n\times k}\; \mid\; \displaystyle\sum_{j\in J}\|{\mathbf{x}}^j\|\leq 1\right\}$$ denotes the closed unit ball of the product space $\mathbb R^{n\times k}$, which is equipped with the sum norm $\|\mathbf{x}\|=\displaystyle\sum_{j\in J}\|{\mathbf{x}}^j\|$.
\end{theorem}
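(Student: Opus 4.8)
The plan is to reuse, almost verbatim, the machinery built in the proof of Theorem~\ref{Lipschitz_property_of_v}. Recall that there we fixed the finite index set $\Omega_1=\{\omega_1,\dots,\omega_r\}\subset\Omega$ of partitions that correspond to the global solutions at $\bar{\mathbf{a}}$, so that $x_{\omega_\ell}(\bar{\mathbf{a}})\in F(\bar{\mathbf{a}})$ for every $\ell$, and we produced a radius $\delta_0>0$ for which the confinement \eqref{basic_inclusion} holds, namely $F(\mathbf{a})\subset\{x_{\omega_1}(\mathbf{a}),\dots,x_{\omega_r}(\mathbf{a})\}$ whenever $\|\mathbf{a}-\bar{\mathbf{a}}\|<\delta_0$. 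This inclusion is the heart of the matter: it says that, locally around $\bar{\mathbf{a}}$, every global solution of the perturbed problem is the image, under one of finitely many \emph{fixed} maps $x_{\omega_\ell}(\cdot)$, of the perturbed data.

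First I would observe that each candidate map $x_{\omega_\ell}(\cdot)$ is affine in $\mathbf{a}$, since by \eqref{x_omega} each block $x^j_{\omega_\ell}(\mathbf{a})$ is the arithmetic mean of the data points indexed by $I_{\omega_\ell}(j)$. Consequently each $x_{\omega_\ell}(\cdot)$ is globally Lipschitz; using the sum norm on $\mathbb R^{n\times k}$ together with the fact that $\{I_{\omega_\ell}(j)\mid j\in J\}$ is a partition of $I$, a direct estimate even yields the Lipschitz constant $1$. In any case, let $L_\ell$ be a Lipschitz constant of $x_{\omega_\ell}(\cdot)$ near $\bar{\mathbf{a}}$, and set $L=\max\{L_1,\dots,L_r\}$ and $\delta=\delta_0$.

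Then I would verify \eqref{upper_Lipschitz} directly. Fix any $\mathbf{a}$ with $\|\mathbf{a}-\bar{\mathbf{a}}\|<\delta$ and an arbitrary $\mathbf{x}\in F(\mathbf{a})$. By \eqref{basic_inclusion} there is an index $\ell\in\{1,\dots,r\}$ with $\mathbf{x}=x_{\omega_\ell}(\mathbf{a})$. Since $x_{\omega_\ell}(\bar{\mathbf{a}})\in F(\bar{\mathbf{a}})$ and $x_{\omega_\ell}(\cdot)$ is Lipschitz with constant $L_\ell\leq L$, we obtain $\|\mathbf{x}-x_{\omega_\ell}(\bar{\mathbf{a}})\|\leq L\|\mathbf{a}-\bar{\mathbf{a}}\|$, whence $\mathbf{x}\in F(\bar{\mathbf{a}})+L\|\mathbf{a}-\bar{\mathbf{a}}\|\bar B_{\mathbb R^{n\times k}}$. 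As $\mathbf{x}\in F(\mathbf{a})$ was arbitrary, this gives the desired inclusion \eqref{upper_Lipschitz}.

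The main obstacle is not in the present argument, which is genuinely short, but in the confinement \eqref{basic_inclusion} inherited from the previous proof; everything here rests on the fact that the global solution set is trapped, near $\bar{\mathbf{a}}$, inside the finite family of smooth barycentric maps $x_{\omega_\ell}(\cdot)$ that already interpolate the global solutions at $\bar{\mathbf{a}}$. The only care needed is to keep $\delta\le\delta_0$ so that \eqref{basic_inclusion} stays in force, and to take the uniform constant $L$ as the maximum over the finitely many indices $\ell$. Note that, unlike a lower/pseudo-Lipschitz statement, upper Lipschitzness requires no claim that solutions of the base problem \emph{persist}; it only bounds how far perturbed solutions can drift from $F(\bar{\mathbf{a}})$, which is exactly what the affine maps $x_{\omega_\ell}(\cdot)$ deliver.
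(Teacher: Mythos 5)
Your proof is correct and follows essentially the same route as the paper's own argument: both rest on the confinement \eqref{basic_inclusion} established in the proof of Theorem~\ref{Lipschitz_property_of_v} and on the Lipschitz continuity of the finitely many barycentric maps $\mathbf{x}_{\omega_\ell}(\cdot)$, with $L$ taken as the maximum of their Lipschitz constants and $\delta\le\delta_0$. The only cosmetic difference is that you observe these maps are affine (with constant $1$ in the sum norms), whereas the paper merely invokes their continuous differentiability to get local Lipschitz constants $L_{\omega_\ell}$.
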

\begin{proof} Let $\Omega$, $\Omega_1=\left\{\omega_1,\dots,\omega_r\right\}$, $\mathbf{x}_\omega(\mathbf{a})=({\mathbf{x}}^1_\omega(\mathbf{a}),\dots,{\mathbf{x}}^k_\omega(\mathbf{a}))\in\mathbb R^{n\times k}$, and $\delta_0$ be constructed as in the proof of the above theorem.  For any $\omega\in\Omega$, the vector function $\mathbf{x}_\omega(.)$, which maps $\mathbb R^{n\times m}$ to $\mathbb R^{n\times k}$, is continuously differentiable. Hence, there exist $L_\omega>0$ and $\delta_\omega>0$ such that
\begin{equation}\label{x_omega_ell}
\|\mathbf{x}_\omega(\mathbf{a})-x_\omega(\widetilde {\mathbf{a}})\|\leq L_\omega\|\mathbf{a}-\widetilde {\mathbf{a}}\|
\end{equation} for any $\mathbf{a},\, \widetilde {\mathbf{a}}$  satisfying $\|\mathbf{a}-\bar{\mathbf{a}}\|<\delta_\omega$ and $\|\widetilde {\mathbf{a}}-\bar{\mathbf{a}}\|<\delta_\omega$. Set $$L=\max\{L_{\omega_1},\dots,L_{\omega_r}\}\ \; {\rm and}\ \; \delta=\min\{\delta_0,\delta_{\omega_1}\dots,\delta_{\omega_r}\}.$$ Then, for every $\mathbf{a}$ satisfying $\|\mathbf{a}-\bar{\mathbf{a}}\|<\delta$, by \eqref{basic_inclusion} and \eqref{x_omega_ell} one has 
\begin{eqnarray*}\begin{array}{rcl}
		F(\mathbf{a})\subset \{x_{\omega_1}(\mathbf{a}),\dots,x_{\omega_r}(\mathbf{a})\} & \subset& \{x_{\omega_1}(\bar {\mathbf{a}}),\dots,x_{\omega_r}(\bar {\mathbf{a}})\}+L \|\mathbf{a}-\bar{\mathbf{a}}\|\bar B_{\mathbb R^{n\times k}}\\
		& =& F(\bar {\mathbf{a}})+L \|\mathbf{a}-\bar{\mathbf{a}}\|\bar B_{\mathbb R^{n\times k}}.
	\end{array}
\end{eqnarray*}
Hence, inclusion \eqref {upper_Lipschitz} is valid for every $\mathbf{a}$  satisfying $\|\mathbf{a}-\bar{\mathbf{a}}\|<\delta$.
$\hfill\Box$
\end{proof}

\begin{theorem}\label{Aubin property_of_F_1}  {\rm (Aubin property of the local solution map)}  Let  $\bar{\mathbf{x}}=(\bar {\mathbf{x}}^1,...,\bar {\mathbf{x}}^k)$ be an element of $F_1(\bar {\mathbf{a}})$ satisfying condition {\bf (C1)}, that is, $\bar {\mathbf{x}}^{j_1}\neq\bar {\mathbf{x}}^{j_2}$ whenever $j_2\neq j_1$. Then, the local solution map $F_1:\mathbb R^{n\times m}\rightrightarrows\mathbb R^{n\times k}$ has the Aubin property at $(\bar {\mathbf{a}},\bar{\mathbf{x}})$, i.e., there exist $L_1>0$, $\varepsilon>0$, and $\delta_1>0$ such that \begin{equation}\label{Aubin property} F_1(\mathbf{a})\cap B(\bar{\mathbf{x}},\varepsilon)\subset F_1(\widetilde {\mathbf{a}})+L_1 \|\mathbf{a}-\widetilde{\mathbf{a}}\|\bar B_{\mathbb R^{n\times k}}\end{equation}  for all $\mathbf{a}$ and $\widetilde {\mathbf{a}}$ satisfying $\|\mathbf{a}-\bar{\mathbf{a}}\|<\delta_1$ and $\|\widetilde {\mathbf{a}}-\bar{\mathbf{a}}\|<\delta_1$.
\end{theorem}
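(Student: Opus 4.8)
The plan is to exploit the complete combinatorial description of the nontrivial local solutions furnished by Theorems~\ref{necessary_optimality_condition} and~\ref{sufficient_optimality_condition}, together with the candidate maps $\mathbf{x}_\omega(\cdot)$ from \eqref{x_omega} and their local Lipschitz property already recorded in \eqref{x_omega_ell}. Since $\bar{\mathbf{x}}\in F_1(\bar{\mathbf{a}})$ obeys {\bf (C1)}, it is a nontrivial local solution of the reference problem, and I would first read off from Theorem~\ref{necessary_optimality_condition} its combinatorial data: a unique index $j(i)$ with $J_i(\bar{\mathbf{x}})=\{j(i)\}$ for each $i\in I$, the splitting $J_+=\{j\in J\mid A[\bar{\mathbf{x}}^j]\neq\emptyset\}$ and $J_\emptyset=J\setminus J_+$, and the induced partition $\omega_0=\{\bar I(j)\mid j\in J_+\}$ of $I$ with $\bar I(j)=\{i\mid j(i)=j\}$. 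Theorem~\ref{necessary_optimality_condition} then gives $\bar{\mathbf{x}}^j=\mathbf{x}^j_{\omega_0}(\bar{\mathbf{a}})$ for $j\in J_+$ and the abnormality $\bar{\mathbf{x}}^j\notin{\mathcal A}[\bar{\mathbf{x}}]$ for $j\in J_\emptyset$.

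Next I would record the three families of strict inequalities that hold at $(\bar{\mathbf{a}},\bar{\mathbf{x}})$ with positive margins: the pairwise distinctness of the $\bar{\mathbf{x}}^j$, the unique-nearest-centroid inequalities $\|\bar{\mathbf{a}}^i-\bar{\mathbf{x}}^{j(i)}\|<\|\bar{\mathbf{a}}^i-\bar{\mathbf{x}}^{j'}\|$ for $j'\neq j(i)$, and the abnormality gaps $\|\bar{\mathbf{a}}^p-\bar{\mathbf{x}}^j\|>\|\bar{\mathbf{a}}^p-\bar{\mathbf{x}}^{j(p)}\|$ for $j\in J_\emptyset$, $p\in I$ (the last being the explicit form of $\bar{\mathbf{x}}^j\notin{\mathcal A}[\bar{\mathbf{x}}]$). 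Let $L_1$ be a common Lipschitz constant for the maps $\mathbf{x}^j_{\omega_0}(\cdot)$, $j\in J_+$, as in \eqref{x_omega_ell}. By continuity these three families persist on a product neighborhood, so I would fix $\varepsilon>0$ and $\delta_1>0$ small enough that all of them remain valid, with $\mathbf{a}^1,\dots,\mathbf{a}^m$ still pairwise distinct, whenever the data lies within $\delta_1$ of $\bar{\mathbf{a}}$ and the centroid system lies within the controlled radius $\varepsilon+2L_1\delta_1$ of $\bar{\mathbf{x}}$ (the extra room accounting for the drift of the solution to be constructed). Consequently the assignment $i\mapsto j(i)$, the sets $J_+,J_\emptyset$, and the partition $\omega_0$ become \emph{rigid}: they are forced on every centroid system in that ball, for every admissible data vector.

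The core step constructs the required nearby solution. Take $\mathbf{a},\widetilde{\mathbf{a}}$ with $\|\mathbf{a}-\bar{\mathbf{a}}\|<\delta_1$, $\|\widetilde{\mathbf{a}}-\bar{\mathbf{a}}\|<\delta_1$, and any $\mathbf{x}\in F_1(\mathbf{a})\cap B(\bar{\mathbf{x}},\varepsilon)$. By rigidity, $\mathbf{x}$ satisfies {\bf (C1)} and is therefore a nontrivial local solution relative to $\mathbf{a}$ with combinatorial data $(\omega_0,J_\emptyset)$, so Theorem~\ref{necessary_optimality_condition} yields $\mathbf{x}^j=\mathbf{x}^j_{\omega_0}(\mathbf{a})$ for $j\in J_+$ and $\mathbf{x}^j\notin{\mathcal A}[\mathbf{x}]$ for $j\in J_\emptyset$. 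I would then define $\mathbf{x}'$ by keeping the free components, $\mathbf{x}'^j=\mathbf{x}^j$ for $j\in J_\emptyset$, and re-centering the active ones, $\mathbf{x}'^j=\mathbf{x}^j_{\omega_0}(\widetilde{\mathbf{a}})$ for $j\in J_+$. The estimate is immediate from \eqref{x_omega_ell} and the sum norm: $\|\mathbf{x}-\mathbf{x}'\|=\sum_{j\in J_+}\|\mathbf{x}^j_{\omega_0}(\mathbf{a})-\mathbf{x}^j_{\omega_0}(\widetilde{\mathbf{a}})\|\leq L_1\|\mathbf{a}-\widetilde{\mathbf{a}}\|$, whence $\|\mathbf{x}'-\bar{\mathbf{x}}\|\leq\varepsilon+L_1\|\mathbf{a}-\widetilde{\mathbf{a}}\|\leq\varepsilon+2L_1\delta_1$. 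To conclude $\mathbf{x}'\in F_1(\widetilde{\mathbf{a}})$ I would verify the hypotheses of Theorem~\ref{sufficient_optimality_condition} for $\mathbf{x}'$ at the data $\widetilde{\mathbf{a}}$: since $\mathbf{x}'$ lies in the prescribed ball and $\widetilde{\mathbf{a}}$ is within $\delta_1$ of $\bar{\mathbf{a}}$, rigidity supplies {\bf (C1)}, the equalities $|J_i(\mathbf{x}')|=1$ with the same assignment, and hence $A[\mathbf{x}'^j]\neq\emptyset\iff j\in J_+$ with partition $\omega_0$; formula \eqref{local_solution_components} holds for $j\in J_+$ by construction; and \eqref{abnormal_components} holds for $j\in J_\emptyset$ because the abnormality gaps of the previous paragraph apply to the pair $(\widetilde{\mathbf{a}},\mathbf{x}')$. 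Theorem~\ref{sufficient_optimality_condition} then gives $\mathbf{x}'\in F_1(\widetilde{\mathbf{a}})$, which is precisely inclusion \eqref{Aubin property} with constant $L_1$.

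I expect the main obstacle to be the uniform, simultaneous control behind rigidity: a single pair $(\varepsilon,\delta_1)$ must force the same assignment, the same $J_+,J_\emptyset$, and the same partition $\omega_0$ not only on the given solution $\mathbf{x}$ at the data $\mathbf{a}$ but also on the \emph{constructed} system $\mathbf{x}'$ at the \emph{different} data $\widetilde{\mathbf{a}}$. The delicate point is checking \eqref{abnormal_components} for $\mathbf{x}'$, where the forbidden set ${\mathcal A}[\mathbf{x}']$ is built from balls $\bar B(\widetilde{\mathbf{a}}^p,\|\widetilde{\mathbf{a}}^p-\mathbf{x}'^q\|)$ whose centers and radii both vary with $\widetilde{\mathbf{a}}$ while the tested component $\mathbf{x}'^j=\mathbf{x}^j$ is held fixed. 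This is resolved exactly as in the proof of Theorem~\ref{sufficient_optimality_condition}: the positive distance from $\bar{\mathbf{x}}^j$ to the compact set ${\mathcal A}[\bar{\mathbf{x}}]$ survives after shrinking $\varepsilon$ and $\delta_1$, so enlarging the centroid neighborhood to radius $\varepsilon+2L_1\delta_1$ (to absorb the drift $\|\mathbf{x}'-\mathbf{x}\|\leq L_1\|\mathbf{a}-\widetilde{\mathbf{a}}\|$) still keeps every candidate strictly outside the moving balls, and a single pair $(\varepsilon,\delta_1)$ indeed suffices for all the verifications above.
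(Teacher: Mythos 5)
Your proposal is correct and follows essentially the same route as the paper's proof: the same split of $J$ into attractive and non-attractive indices, the same construction of the nearby solution (re-centering the attractive components via the barycenter map at $\widetilde{\mathbf{a}}$ while freezing the non-attractive ones), the same Lipschitz estimate, and the same persistence-of-strict-inequalities argument (your ``rigidity'') feeding Theorem~\ref{necessary_optimality_condition} applied to $\mathbf{x}$ and Theorem~\ref{sufficient_optimality_condition} applied to the constructed point. The only cosmetic difference is the bookkeeping of the enlarged neighborhood radius ($\varepsilon+2L_1\delta_1$ in your version versus $2k\varepsilon$ with $\frac{2}{k}L_1\delta_1<\varepsilon$ in the paper), which plays the identical role of absorbing the drift of the constructed solution.
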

\begin{proof}  Suppose that $\bar{\mathbf{x}}=(\bar {\mathbf{x}}^1,...,\bar {\mathbf{x}}^k)\in F_1(\bar {\mathbf{a}})$ and  $\bar {\mathbf{x}}^{j_1}\neq\bar {\mathbf{x}}^{j_2}$ for all $j_1,j_2\in J$ with $j_2\neq j_1$. Denote by $J_1$ the set of the indexes $j\in J$ such that $\bar {\mathbf{x}}^j$ is attractive w.r.t. the data set $\{\bar {\mathbf{a}}^1,\dots,\bar {\mathbf{a}}^m\}$. Put $J_2=J\setminus J_1$. For every $j\in J_1$, by Theorem 
\ref{necessary_optimality_condition} one has
\begin{equation}\label{strict_inequalities_1}
\|\bar {\mathbf{a}}^i-\bar {\mathbf{x}}^j\|<\|\bar {\mathbf{a}}^i-\bar {\mathbf{x}}^q\|\quad\; (\forall i\in I(j),\; \forall q\in J\setminus\{j\}).\end{equation}
In addition, the following holds:
\begin{equation}\label{local_solution_components_1}
\bar {\mathbf{x}}^j=\frac{1}{|I(j)|}\displaystyle\sum_{i\in I(j)} \bar {\mathbf{a}}^i,\end{equation} where $I(j)=\left\{i\in I\mid \bar {\mathbf{a}}^i\in A[\bar {\mathbf{x}}^j]\right\}$.  For every $j\in J_2$, by Theorem \ref{necessary_optimality_condition} one has
\begin{equation}\label{strict_inequalities_2}
\|\bar {\mathbf{x}}^q-\bar {\mathbf{a}}^p\|<\|\bar {\mathbf{x}}^j-\bar {\mathbf{a}}^p\|\quad\; (\forall q\in J_1,\; \forall p\in I(q)).\end{equation} Let $\varepsilon_0>0$ be such that $\|\bar {\mathbf{x}}^{j_1}-\bar {\mathbf{x}}^{j_2}\|>\varepsilon_0$ for all $j_1,j_2\in J$ with $j_2\neq j_1$.

By \eqref{strict_inequalities_1} and \eqref{strict_inequalities_2}, there exist $\delta_0> 0$ and $\varepsilon \in \left(0,\displaystyle\frac{\varepsilon_0}{4}\right)$ such that 
\begin{equation}\label{strict_inequalities_1a}
\|{\mathbf{a}}^i-{\mathbf{x}}^j\|<\|{\mathbf{a}}^i-{\mathbf{x}}^q\|\quad\; (\forall j\in J_1,\; \forall i\in I(j),\; \forall q\in J\setminus\{j\})\end{equation}
and
\begin{equation}\label{strict_inequalities_2a}
\|{\mathbf{x}}^q-{\mathbf{a}}^p\|<\|{\mathbf{x}}^j-{\mathbf{a}}^p\|\quad\; (\forall j\in J_2,\; \forall q\in J_1,\; \forall p\in I(q))\end{equation}
for all $\mathbf{a}=({\mathbf{a}}^1,...,{\mathbf{a}}^m)\in\mathbb R^{n\times m}$ and $\mathbf{x}=({\mathbf{x}}^1,\dots,{\mathbf{x}}^k)\in\mathbb R^{n\times k}$ satisfying $\|\mathbf{a}-\bar{\mathbf{a}}\|<\delta_0$ and $\|\mathbf{x}-\bar{\mathbf{x}}\|<2k\varepsilon$. As  $\bar {\mathbf{x}}^{j_1}\neq\bar {\mathbf{x}}^{j_2}$ for all $j_1,j_2\in J$ with $j_2\neq j_1$, by taking a smaller $\varepsilon>0$ (if necessary), for any  $\mathbf{x}=({\mathbf{x}}^1,\dots,{\mathbf{x}}^k)\in\mathbb R^{n\times k}$ satisfying  $\|\mathbf{x}-\bar{\mathbf{x}}\|<2k\varepsilon$ we have ${\mathbf{x}}^{j_1}\neq {\mathbf{x}}^{j_2}$ for all $j_1,j_2\in J$ with $j_2\neq j_1$.

For every $j\in J_1$ and $\mathbf{a}=({\mathbf{a}}^1,...,{\mathbf{a}}^m)\in\mathbb R^{n\times m}$, define
\begin{equation}\label{local_solution_components_1a}
{\mathbf{x}}^j(\mathbf{a})=\frac{1}{|I(j)|}\displaystyle\sum_{i\in I(j)} {\mathbf{a}}^i.
\end{equation} Comparing \eqref{local_solution_components_1a} with \eqref{local_solution_components_1} yields ${\mathbf{x}}^j(\bar {\mathbf{a}})=\bar {\mathbf{x}}^j$ for all $j\in J_1$. Then, by the continuity of the vector functions ${\mathbf{x}}^j(.)$, where $j\in J_1$, we may assume that $\|{\mathbf{x}}^j(\widetilde  {\mathbf{a}})-\bar {\mathbf{x}}^j\|<\varepsilon$ for all $j\in J_1$ and $\widetilde  a=(\widetilde {\mathbf{a}}^1,...,\widetilde {\mathbf{a}}^m)\in\mathbb R^{n\times m}$ satisfying $\|\widetilde {\mathbf{a}}-\bar{\mathbf{a}}\|<\delta_0$ (one can take a smaller $\delta_0>0$, if necessary).

Since the vector functions ${\mathbf{x}}^j(.)$, $j\in J_1$, are continuously differentiable, there exist $L_1>0$ such that 
\begin{equation}\label{x_j_a}
\|{\mathbf{x}}^j(\mathbf{a})-{\mathbf{x}}^j(\widetilde {\mathbf{a}})\|\leq \frac{1}{k} L_1\|\mathbf{a}-\widetilde {\mathbf{a}}\|
\end{equation} for any $\mathbf{a},\, \widetilde {\mathbf{a}}$  satisfying $\|\mathbf{a}-\bar{\mathbf{a}}\|<\delta_0$ and $\|\widetilde {\mathbf{a}}-\bar{\mathbf{a}}\|<\delta_0$ (one can take a smaller $\delta_0>0$, if necessary).
Choose $\delta_1\in (0,\delta_0)$ as small as $\frac{2}{k} L_1\delta_1<\varepsilon$.

With the chosen constants $L_1>0$, $\varepsilon>0$, and $\delta_1>0$,  let us show that the inclusion \eqref{Aubin property} is fulfilled  for all $\mathbf{a}$ and $\widetilde {\mathbf{a}}$ satisfying $\|\mathbf{a}-\bar{\mathbf{a}}\|<\delta_1$ and $\|\widetilde {\mathbf{a}}-\bar {\mathbf{a}} \|<\delta_1$.

Let $\mathbf{a}$ and $\widetilde  {\mathbf{a}}$ be such that $\|\mathbf{a}-\bar{\mathbf{a}}\|<\delta_1$ and $\|\widetilde {\mathbf{a}}-\bar {\mathbf{a}}\|<\delta_1$. Select an arbitrary element $\mathbf{x}=({\mathbf{x}}^1,\dots,{\mathbf{x}}^k)$ of the set $F_1(\mathbf{a})\cap B(\bar{\mathbf{x}},\varepsilon)$. Put $\widetilde {\mathbf{x}}^j={\mathbf{x}}^j(\widetilde {\mathbf{a}})$ for all $j\in J_1$, where ${\mathbf{x}}^j(\mathbf{a})$ is given by \eqref{local_solution_components_1a}. For any $j\in J_2$, set $\widetilde {\mathbf{x}}^j={\mathbf{x}}^j$.

{\sc Claim 1.} \textit{The vector $\widetilde {\mathbf{x}}=(\widetilde {\mathbf{x}}^1,\dots,\widetilde {\mathbf{x}}^k)$ belongs to $F_1(\widetilde {\mathbf{a}})$.}

Indeed, the inequalities $\|\mathbf{a}-\bar{\mathbf{a}}\|<\delta_1$ and  $\|\mathbf{x}-\bar{\mathbf{x}}\|<\varepsilon$ imply that both properties \eqref{strict_inequalities_1a} and \eqref{strict_inequalities_2a}
are available. From \eqref{strict_inequalities_1a} it follows that, for every $j\in J_1$, the attraction set $A[{\mathbf{x}}^j]$ is $\{{\mathbf{a}}^i\mid i\in I(j)\}$. Since $I(j)\neq\emptyset$ for each $j\in J_1$ and $\mathbf{x}\in F_1(\mathbf{a})$, by Theorem \ref{necessary_optimality_condition} we have \begin{equation}\label{local_solution_components_2}
{\mathbf{x}}^j=\frac{1}{|I(j)|}\displaystyle\sum_{i\in I(j)} {\mathbf{a}}^i.\end{equation} Comparing \eqref{local_solution_components_2} with \eqref{local_solution_components_1a} yields ${\mathbf{x}}^j={\mathbf{x}}^j(\mathbf{a})$ for all $j\in J_1$. By \eqref{strict_inequalities_2a} we see that, for every $j\in J_2$, the attraction set $A[{\mathbf{x}}^j]$ is empty. Moreover, one has 
\begin{equation}\label{abnormal_components_a}
{\mathbf{x}}^j\notin {\mathcal A}[\mathbf{x}]\quad\; (\forall j\in J_2)
\end{equation} where $ {\mathcal A}[\mathbf{x}]$ is the union of the balls $\bar B({\mathbf{a}}^p,\|{\mathbf{a}}^p-{\mathbf{x}}^q\|)$ with $p\in I$, $q\in J$ satisfying $p\in I(q)$.

For each $j\in J_1$, using \eqref{x_j_a} we have
\begin{equation*}\begin{array}{rl}
\|{\mathbf{x}}^j(\widetilde {\mathbf{a}})-\bar {\mathbf{x}}^j\| & \leq \|{\mathbf{x}}^j(\widetilde {\mathbf{a}})-{\mathbf{x}}^j(\mathbf{a})\|+ \|{\mathbf{x}}^j(\mathbf{a})-\bar {\mathbf{x}}^j\|\\
& \leq \frac{1}{k} L_1\|\widetilde {\mathbf{a}}-\mathbf{a}\|+\varepsilon\\
& \leq \frac{1}{k} L_1\left(\|\widetilde {\mathbf{a}}-\bar{\mathbf{a}}\|+\|\bar{\mathbf{a}}-\mathbf{a}\|\right)+\varepsilon\\
& \leq \frac{2}{k} L_1\delta_1+\varepsilon<2\varepsilon.
\end{array}
\end{equation*} Besides, for each  $j\in J_2$, we have $\|{\mathbf{x}}^j(\widetilde {\mathbf{a}})-\bar {\mathbf{x}}^j\| =\|{\mathbf{x}}^j-\bar {\mathbf{x}}^j\|<\varepsilon$.  Therefore,
$$\|\widetilde {\mathbf{x}}-\bar{\mathbf{x}}\|=\sum_{j\in J_1}\|{\mathbf{x}}^j(\widetilde {\mathbf{a}})-\bar {\mathbf{x}}^j\|+\sum_{j\in J_2}\|{\mathbf{x}}^j-\bar {\mathbf{x}}^j\|<2k\varepsilon.$$ In combination with the inequality $\|\widetilde {\mathbf{a}}-\bar{\mathbf{a}} \|<\delta_1$, this assures that the properties \eqref{strict_inequalities_1a} and \eqref{strict_inequalities_2a}, where $\widetilde {\mathbf{a}}$ and  $\mathbf{x}(\widetilde {\mathbf{a}})$ respectively play the roles of $\mathbf{a}$ and $\mathbf{x}$, hold. In other words, one has 
\begin{equation}\label{strict_ineq_tilde_1}
\|\widetilde {\mathbf{a}}^i-\widetilde {\mathbf{x}}^j\|<\|\widetilde {\mathbf{a}}^i-\widetilde {\mathbf{x}}^q\|\quad\; (\forall j\in J_1,\; \forall i\in I(j),\; \forall q\in J\setminus\{j\})\end{equation}
and
\begin{equation}\label{strict_ineq_tilde_2}
\|\widetilde {\mathbf{x}}^q-\widetilde {\mathbf{a}}^p\|<\|\widetilde {\mathbf{x}}^j-\widetilde {\mathbf{a}}^p\|\quad\; (\forall j\in J_2,\; \forall q\in J_1,\; \forall p\in I(q)).\end{equation}
So, similar to the above case of $\mathbf{x}$, for every $j\in J_1$, the attraction set $A[\widetilde {\mathbf{x}}^j]$ is $\{\widetilde {\mathbf{a}}^i\mid i\in I(j)\}$. Recall that $I(j)\neq\emptyset$ for each $j\in J_1$ and $\widetilde {\mathbf{x}}^j$ was given by \begin{equation}\label{local_solution_components_3}
\widetilde {\mathbf{x}}^j={\mathbf{x}}^j(\widetilde {\mathbf{a}})=\frac{1}{|I(j)|}\displaystyle\sum_{i\in I(j)} \widetilde {\mathbf{a}}^i.\end{equation} In addition, for every $j\in J_2$, the attraction set $A[\widetilde {\mathbf{x}}^j]$ is empty and one has 
\begin{equation}\label{abnormal_components_atilde}
\widetilde {\mathbf{x}}^j\notin {\mathcal A}[\widetilde{\mathbf{x}}]\quad\; (\forall j\in J_2),
\end{equation} where $ {\mathcal A}[\widetilde{\mathbf{x}}]$ is the union of the balls $\bar B(\widetilde {\mathbf{a}}^p,\|\widetilde {\mathbf{a}}^p-\widetilde {\mathbf{x}}^q\|)$ with $p\in I$, $q\in J$ satisfying $p\in I(q)$. Besides, from \eqref{strict_ineq_tilde_1} and \eqref{strict_ineq_tilde_2} it follows that  $|J_i(\widetilde {\mathbf{x}})|=1$ for every $i\in I$. Since $\|\widetilde {\mathbf{x}}-\bar{\mathbf{x}}\|<2k\varepsilon$, we have  $\widetilde {\mathbf{x}}^{j_1}\neq\widetilde {\mathbf{x}}^{j_2}$ for all $j_1,j_2\in J$ with $j_2\neq j_1$. Due to the last two properties and \eqref{local_solution_components_3}, \eqref{abnormal_components_atilde}, by Theorem \ref{sufficient_optimality_condition} we conclude that $\widetilde {\mathbf{x}}\in F_1(\widetilde {\mathbf{a}})$.

{\sc Claim 2.} \textit{One has $\mathbf{x}\in \widetilde {\mathbf{x}}+ L_1 \|\mathbf{a}-\widetilde{\mathbf{a}}\|\bar B_{\mathbb R^{n\times k}}$.}

Indeed, since ${\mathbf{x}}^j={\mathbf{x}}^j(\mathbf{a})$ for all $j\in J_1$, $\widetilde {\mathbf{x}}^j={\mathbf{x}}^j$ for any $j\in J_2$, by \eqref{local_solution_components_3} and \eqref{x_j_a} we have  
\begin{align*}
\label{x_x_widetilde}
\|\mathbf{x}-\widetilde {\mathbf{x}}\|&= \sum_{j\in J_1} \|{\mathbf{x}}^j-\widetilde {\mathbf{x}}^j\|+ \sum_{j\in J_2} \|{\mathbf{x}}^j-\widetilde {\mathbf{x}}^j\|\\
& = \sum_{j\in J_1}  \|{\mathbf{x}}^j(\mathbf{a})- {\mathbf{x}}^j(\widetilde {\mathbf{a}})\|\\
&\leq k.\frac{1}{k} L_1\|\mathbf{a}-\widetilde {\mathbf{a}}\|=L_1\|\mathbf{a}-\widetilde {\mathbf{a}}\|.
\end{align*}
It follows that ${\mathbf{x}}\in \widetilde{\mathbf{x}}+ L_1 \|\mathbf{a}-\widetilde{\mathbf{a}}\|\bar B_{\mathbb R^{n\times k}}$. 

Combining Claim 2 with Claim 1, we have ${\mathbf{x}}\in F_1(\widetilde {\mathbf{a}})+L_1 \|\mathbf{a}-\widetilde{\mathbf{a}}\|\bar B_{\mathbb R^{n\times k}}$. Thus, property \eqref{Aubin property} is valid for all $\mathbf{a}$ and $\widetilde  {\mathbf{a}}$ satisfying $\|\mathbf{a}-\bar{\mathbf{a}}\|<\delta_1$ and $\|\widetilde {\mathbf{a}}-\bar{\mathbf{a}}\|<\delta_1$. $\hfill\Box$
\end{proof}


\begin{thebibliography}{99}
	
\bibitem{Aggarwal 2014} Aggarwal, C.C., Reddy, C.K.: Data Clustering Algorithms and Applications. Chapman $\&$  Hall/CRC Press, Boca Raton, Florida (2014)

\bibitem{Aloise 2009} Aloise, D., Deshpande, A., Hansen, P., Popat P.:NP-hardness of Euclidean sum-of-squares clustering. Mach. Learn. {\bf 75}, 245--248 (2009)

\bibitem{Bagirov_2008}  Bagirov, A.M.: Modified global $k$-means algorithm for minimum sum-of-squares clustering problems. Pattern Recognit. {\bf 41}, 3192--3199 (2008)

\bibitem{Bagirov_2014}  Bagirov, A.M.: An incremental DC algorithm for the minimum sum-of-squares clustering. Iranian J. Oper. Res. {\bf 5}, 1--14 (2014)

\bibitem{Bagirov_Rubinov_2003} Bagirov, A.M., Rubinov, A.M., Soukhoroukova, N.V., Yearwood, J.: Unsupervised and supervised data classification via nonsmooth and global optimization. TOP {\bf 11}, 1--93 (2003)

\bibitem{Bagirov_2011} Bagirov, A.M., Ugon, J., Webb, D.: Fast modified global $k$-means algorithm for incremental cluster construction. Pattern Recognit. {\bf 44}, 866--876 (2011) 

\bibitem{Bagirov_Yearwood_2006} Bagirov, A.M., Yearwood, J.: A new nonsmooth optimization algorithm for minimum  sum-of-squares clustering problems. European J. Oper. Res. \textbf{170}, 578--596 (2006)

\bibitem{Bock_1998} Bock, H.H.: Clustering and neural networks. In: Advances in Data Science and Classification, pp. 265--277, Springer, Berlin (1998)

\bibitem{Brusco 2006} Brusco, M.J.: A repetitive branch-and-bound procedure for minimum within-cluster sum of squares partitioning. Psychometrika {\bf 71}, 347--363 (2006)

\bibitem{Clarke_1990} Clarke, F.H.: Optimization and Nonsmooth Analysis. SIAM, Philadelphia (1990)

\bibitem{Costa 2017} Costa, L.R., Aloise, D., Mladenovi\'c, N.: Less is more: basic variable neighborhood search heuristic for balanced minimum sum-of-squares clustering. Inform. Sci. 415/416, 247--253 (2017) 

\bibitem{Dem'yanov_Rubinov_1995} Dem'yanov, V.F., Rubinov, A.M.: Constructive Nonsmooth Analysis. Peter Lang Verlag, Frankfurt am Main (1995)

\bibitem{Dem'yanov_Vasil'ev_1985} Dem'yanov, V.F., Vasil'ev, L.V.: Nondifferentiable Optimization. Translated from the Russian by T. Sasagawa. Optimization Software Inc.,	New York (1985)

\bibitem{Du Merle 2000} Du Merle, O., Hansen, P., Jaumard, B., Mladenovi\'c, N.: An interior point algorithm for minimum sum of squares clustering. SIAM J. Sci. Comput. {\bf 21}, 1485--1505 (2000)

\bibitem{NTVHang_Yen_JOTA2016} Hang, N.T.V., Yen, N.D.: On the problem of minimizing a difference of polyhedral convex functions under linear constraints. J. Optim. Theory Appl. \textbf{171}, 617--642 (2016)

\bibitem{Jain 2010} Jain, A.K., Data clustering: 50 years beyond $k$-means, Pattern Recognit. Lett. {\bf 31}, 651--666 (2010)

\bibitem{Kantardzic 2011} Kantardzic, M.: Data Mining Concepts, Models, Methods, and Algorithms, Second Edition. John Wiley $\&$ Sons, Hoboken, New Jersey (2011)

\bibitem{Kumar 2017} Kumar, K.M., Reddy, A.R.M: An efficient $k$-means clustering filtering algorithm using density based initial cluster centers. Inform. Sci. 418/419, 286--301 (2017)  

\bibitem{PhamDinh_LeThi_2007} Le Thi, H.A., Belghiti, M.T., Pham Dinh, T.: A new efficient algorithm based on DC programming and DCA for clustering. J. Global Optim. {\bf 37}, 593--608 (2007)

\bibitem{PhamDinh_LeThi_2009} Le Thi, H.A., Pham Dinh, T.: Minimum sum-of-squares clustering by DC programming and DCA. ICIC 2009, LNAI 5755, 327--340 (2009)

\bibitem{LeeTamYen_book}  Lee, G.M., Tam, N.N., Yen, N.D.:  Quadratic Programming and Affine Variational Inequalities: A Qualitative Study. Springer Verlag, New York (2005)

\bibitem{Likas_2003} Likas, A., Vlassis, N., Verbeek, J.J.: The global $k$-means clustering algorithm. Pattern Recognit. {\bf 36}, 451--461 (2003)

\bibitem{MacQueen 1967} MacQueen, J.: Some methods for classification and analysis of multivariate observations. In: Proceedings of the 5th Berkeley Symposium on Mathematical Statistics and Probability, pp. 281--297 (1967)

\bibitem{Mahajan_NPHard_2009} Mahajan, M., Nimbhorkar, P., Varadarajan, K.: The planar $k$-means problem is NP-hard. Theoret. Comput. Sci. \textbf{442}, 13--21 (2012)

\bibitem{Ordin_Bagirov_2015}  Ordin, B., Bagirov, A.M.:  A heuristic algorithm for solving the minimum sum-of-squares clustering problems. J. Global Optim. {\bf 61}, 341--361 (2015)

\bibitem{Peng 2005} Peng, J., Xiay, Y.: A cutting algorithm for the minimum sum-of-squared error clustering. In: Proceedings of the SIAM International Data Mining Conference (2005)

\bibitem{PhamDinh_LeThi_AMV97} Pham Dinh, T., Le Thi, H.A.: Convex analysis approach to d.c. programming: theory, algorithms and applications. Acta Math. Vietnam. \textbf{22}, 289--355  (1997)

\bibitem{Rockafellar_1970} Rockafellar, R.T.: Convex Analysis. Princeton University Press, Princeton (1970)

\bibitem{Shereli 2005} Sherali, H.D., Desai, J.: A global optimization RLT-based approach for solving the hard clustering problem. J. Global Optim. {\bf 32}, 281--306 (2005)

\bibitem{Xie_2011} Xie, J., Jiang, S., Xie, W., Gao, X.:  An efficient global $k$-means clustering algorithm. J. Comput. {\bf 6}, 271--279 (2011)
\end{thebibliography}
\end{document}